\documentclass{amsart}
\usepackage{textcomp}
\usepackage{amsmath}
\usepackage{mathrsfs}
\usepackage{amsthm}
\usepackage[linktocpage]{hyperref} 
\usepackage{amsfonts,graphics,amsthm,amsfonts,amscd,latexsym}
\usepackage{epsfig}
\usepackage{flafter}
\usepackage{mathtools}
\usepackage{comment} 
\usepackage{stmaryrd}
\usepackage{enumitem}
\usepackage{epigraph}
\usepackage{tikz-cd}
\hypersetup{
colorlinks=true,
linkcolor=red,
citecolor=green,
filecolor=blue,
urlcolor=blue
}
\usepackage{tikz}
\usetikzlibrary{graphs,positioning,arrows,shapes.misc,decorations.pathmorphing}

\tikzset{
>=stealth,
every picture/.style={thick},
graphs/every graph/.style={empty nodes},
}

\tikzstyle{vertex}=[
draw,
circle,
fill=black,
inner sep=1pt,
minimum width=5pt,
]
\usepackage[position=top]{subfig}
\usepackage[alphabetic]{amsrefs}
\usepackage{amssymb}
\usepackage{color}

\setlength{\textwidth}{\paperwidth}
\addtolength{\textwidth}{-2.6in}
\calclayout

\usetikzlibrary{decorations.pathmorphing}
\tikzstyle{printersafe}=[decoration={snake,amplitude=0pt}]

\newcommand{\ord}{\operatorname{ord}}

\newcommand{\lcm}{\operatorname{lcm}}

\newcommand{\oo}{\mathcal{O}}

\def\O#1.{\mathcal {O}_{#1}}
\def\pr #1.{\mathbb P^{#1}}
\def\af #1.{\mathbb A^{#1}}
\def\ses#1.#2.#3.{0\to #1\to #2\to #3 \to 0}
\def\xrar#1.{\xrightarrow{#1}}
\def\K#1.{K_{#1}}
\def\bA#1.{\mathbf{A}_{#1}}
\def\bM#1.{\mathbf{M}_{#1}}
\def\bL#1.{\mathbf{L}_{#1}}
\def\bB#1.{\mathbf{B}_{#1}}
\def\bK#1.{\mathbf{K}_{#1}}
\def\subs#1.{_{#1}}
\def\sups#1.{^{#1}}

\usepackage{tikz}
\usetikzlibrary{matrix,arrows,decorations.pathmorphing}

\newtheorem{theorem}{Theorem}[section]
\newtheorem{lemma}[theorem]{Lemma}
\newtheorem{proposition}[theorem]{Proposition}
\newtheorem{corollary}[theorem]{Corollary}

\theoremstyle{definition}
\newtheorem{definition}[theorem]{Definition}
\newtheorem{example}[theorem]{Example}

\newtheorem{question}[theorem]{Question}
\newtheorem{construction}[theorem]{Construction}

\newtheorem{remark}[theorem]{Remark}

\theoremstyle{remark}

\numberwithin{equation}{section}

\usepackage[all]{xy}

\newcounter{rownumber}[figure]
\setcounter{rownumber}{0}

\newcounter{rownumber-irr}[figure]
\setcounter{rownumber-irr}{0}

\newcounter{rownumber-p1}[figure]
\setcounter{rownumber-p1}{0}

\begin{document}
\title[Jordan property for groups of self-maps of complex manifolds]{Jordan property for groups of bimeromorphic self-maps of complex manifolds with large Kodaira dimension}
\begin{abstract}
We prove that the image of the pluricanonical representation of a group of bimeromorphic automorphisms of a complex manifold  has bounded finite subgroups. As a consequence, we show that the group of bimeromorphic automorphisms of an $n$-dimensional complex manifold whose Kodaira dimension is at least $n-2$, satisfies the Jordan property.
\end{abstract}
\author[K.~Loginov]{Konstantin Loginov}
\address{Steklov Mathematical Institute of Russian Academy of Sciences, Moscow, Russia; Laboratory of Algebraic Geometry, HSE University, Russian Federation; Laboratory of AGHA, Moscow Institute of Physics and Technology}
\email{loginov@mi-ras.ru}

\maketitle
\section{Introduction}
In this note, we study the groups of bimeromorphic automorphisms of complex manifolds. More precisely, we concentrate on the finite subgroups of such groups. One way to study their structure is to establish the so called \emph{Jordan property}, that is, to show that any finite subgroup of a given group admits a normal abelian subgroup of bounded index.

In what follows, we denote by $\mathrm{Aut}(X)$ the group of automorphisms of a complex manifold $X$, by $\mathrm{Bim}(X)$ the group of bimeromorphic self-maps of $X$, and by $\kappa(X)$ the Kodaira dimension of~$X$. 

We list several results on the Jordan property, see Definition \ref{def-jordan-bfs}. 
If $X$ is a compact complex surface, then $\mathrm{Aut}(X)$ is Jordan, while $\mathrm{Bim}(X)$ is Jordan if and only if $X$ is not bimeromorphic to a product of $\mathbb{P}^1$ and an elliptic curve, {see \cite[Theorems 1.6 and 1.7]{PSh21}}. 
If $X$ is a projective variety over a field of characteristic zero, then the group $\mathrm{Aut}(X)$ is Jordan {\cite{MZh18}}.

For K\"ahler varieties, there are the following results. 
If $X$ is a compact K\"ahler variety, then the group $\mathrm{Aut}(X)$ is Jordan {\cite{Kim18}}.
If $X$ be a compact $3$-dimensional K\"ahler variety with $\kappa(X)\geq 0$ and $q(X)=\mathrm{H}^1(X, \oo_X)>0$, then the group $\mathrm{Bim}(X)$ is Jordan {\cite{PSh21b}}. Recently, A. Golota showed that it is possible to get rid of the assumption on the irregularity of $X$:

\begin{theorem}[{\cite{Go21}}]
\label{thm-gol}
Let $X$ be a $3$-dimensional compact K\"ahler variety with $\kappa(X)\geq~0$. Then the group $\mathrm{Bim}(X)$ is Jordan. 
\end{theorem}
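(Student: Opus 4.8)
The plan is to reduce to a minimal model via the minimal model program for compact K\"ahler threefolds and then argue by cases on $\kappa(X)$ using the Iitaka fibration. First I would invoke the MMP for compact K\"ahler threefolds (H\"oring--Peternell, Campana--H\"oring--Peternell): since $\kappa(X)\geq 0$, the threefold $X$ is bimeromorphic to a minimal model $X'$, a compact K\"ahler threefold with $\mathbb Q$-factorial terminal singularities and $K_{X'}$ nef. As $\mathrm{Bim}(-)$ and the Jordan property are bimeromorphic invariants, replace $X$ by $X'$. Any two minimal models are isomorphic in codimension one, so every element of $\mathrm{Bim}(X)$ is a pseudo-automorphism; in particular it preserves $K_X$ and acts by graded automorphisms on the canonical ring $R(X)=\bigoplus_{m\geq 0}\mathrm H^0(X,mK_X)$. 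By abundance for K\"ahler threefolds $K_X$ is semiample, so one obtains the Iitaka fibration $f\colon X\to Y:=\operatorname{Proj}R(X)$, a surjective morphism onto a normal projective variety with $\dim Y=\kappa(X)$ whose general fibre has Kodaira dimension $0$, together with a homomorphism $\rho\colon\mathrm{Bim}(X)\to\mathrm{Aut}(Y)$ for which $f$ is equivariant and $\ker\rho$ is contained in the group $\mathrm{Bim}(X/Y)$ of bimeromorphic self-maps over $Y$.

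Then I would treat the cases $\kappa(X)=3,2,1,0$ in turn. If $\kappa(X)=3$ the manifold $X$ is of general type and $\mathrm{Bim}(X)$ is finite, hence trivially Jordan. If $\kappa(X)\in\{1,2\}$ I would bound the image and the kernel of $\rho$ separately. The group $\mathrm{Aut}(Y)$ is Jordan ($Y$ is projective of dimension $\leq 2$), and the image of $\rho$ preserves the boundary divisor $B_Y$ of the canonical bundle formula $K_X=f^*(K_Y+B_Y)$; since $K_Y+B_Y$ is big, $B_Y$ is rich enough (at least three points when $Y\cong\mathbb P^1$, a nonzero divisor when $Y$ is a genus-one curve, and similarly in the surface case) that the image of $\rho$ in fact lies in a group with bounded finite subgroups. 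The kernel $\mathrm{Bim}(X/Y)$ consists of birational self-maps over $Y$ of the generic fibre $X_\eta$, a curve or surface of Kodaira dimension $0$: the translation part is abelian and the rest is a finite linear group, so $\mathrm{Bim}(X/Y)$ has a normal abelian subgroup of bounded index, and --- this is the point where I expect the real work --- its finite subgroups are uniformly bounded. In the non-isotrivial case this follows from the Lang--N\'eron theorem, which makes the Mordell--Weil group finitely generated and its torsion finite; in the isotrivial case one must instead pass to a finite cover trivializing the fibration and analyse $\mathrm{Bim}$ of the resulting product directly. One then assembles the conclusion from the standard group-theoretic facts that an extension of a Jordan group by a group with bounded finite subgroups is Jordan, and that a group having a bounded-index Jordan normal subgroup is Jordan.

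The remaining case $\kappa(X)=0$, where $K_X\equiv 0$, I expect to be the main obstacle. Here I would use the Bogomolov-type decomposition for minimal K\"ahler threefolds with numerically trivial canonical class: $X$ has a finite \'etale cover $\widetilde X$ which is a complex torus, a product of an elliptic curve with a K3 surface, or a simply connected Calabi--Yau threefold. A suitable finite-index subgroup of $\mathrm{Bim}(X)=\mathrm{PsAut}(X)$ should lift to $\mathrm{Aut}(\widetilde X)$, which is Jordan by Kim's theorem since $\widetilde X$ is a compact K\"ahler manifold; passing back to $\mathrm{Bim}(X)$ via finite-index subgroups then finishes this case. The delicate points throughout are: verifying that pseudo-automorphisms, after shrinking to a finite-index subgroup, genuinely lift to automorphisms of the covers produced by the structure theorems; obtaining the uniform bound on the torsion of Mordell--Weil groups and of automorphism groups of generic fibres (easy via Lang--N\'eron off the isotrivial locus, but requiring care on it); and carrying out the group-extension bookkeeping so that the non-Jordan behaviour of $\mathrm{Bim}(\mathbb P^1\times E)$ --- which is excluded precisely because $\kappa(X)\geq 0$ forces the fibres of the Iitaka fibration to be non-uniruled --- cannot reappear.
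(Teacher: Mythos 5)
This theorem is not proven in the paper: the author cites it directly from Golota \cite{Go21}, so there is no ``paper's own proof'' to compare against. The paper's contribution (Theorem \ref{prop-pluri-bfs} and its consequences) is to extend the range of applicability by proving the bounded-finite-subgroup property of the pluricanonical representation for arbitrary compact complex manifolds; Golota's theorem is taken as a black box and fed into Corollary \ref{main-cor-4}.

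Concerning your sketch as a standalone plan: the high-level architecture (Iitaka fibration plus boundedness on the base, then a separate argument when $\kappa=0$) is sound and matches the standard strategy of Prokhorov--Shramov \cite{PSh21b} and Golota. The mechanism you would ultimately need is exactly Lemma \ref{lem-bfs+jordan} from \cite{PSh21b}. However, there are two places where you assert precisely what needs to be proved. First, for $\kappa(X)\in\{1,2\}$ you claim that the image of $\mathrm{Bim}(X)$ on the base $Y$ lands in a group with bounded finite subgroups because it preserves the boundary $B_Y$ of the canonical bundle formula; this is not automatic (Jordan for $\mathrm{Aut}(Y)$ is weaker than bounded finite subgroups) and is in fact the core technical issue that the pluricanonical-representation approach --- Ueno's Theorem \ref{thm-finite-moishezon} and the present paper's Theorem \ref{prop-pluri-bfs} --- is designed to resolve. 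Your Lang--N\'eron/Mordell--Weil argument addresses the kernel, not the image, and in any case \cite{PSh21b} already covers all of $\kappa\geq 0$ provided $q(X)>0$; Golota's new input is exactly the case $q(X)=0$, which your sketch does not isolate. Second, in the $\kappa=0$ case the passage to a Bogomolov-type decomposition of a $\mathbb{Q}$-factorial terminal minimal K\"ahler threefold is delicate (the smooth version does not apply directly), and the claim that a finite-index subgroup of $\mathrm{Bim}(X)$ lifts to $\mathrm{Aut}(\widetilde X)$ of an \'etale cover is precisely the kind of statement that requires a careful argument about how flops and pseudo-automorphisms interact with the cover; you flag this as delicate, but without a concrete mechanism the step is a genuine gap, not bookkeeping. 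If you want to fill this in, the most direct route is to follow \cite{PSh21b} for $q(X)>0$ and then consult Golota's paper for his treatment of $q(X)=0$, rather than reproving the whole structure theory from MMP.
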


Our goal is to derive an analogous result for complex varieties without the assumption that $X$ is K\"ahler. We prove the following

\begin{theorem}
\label{main-thm-2}
Let $X$ be an $n$-dimensional compact complex variety with $\kappa(X)\geq n-2$. Then the group $\mathrm{Bim}(X)$ is Jordan.
\end{theorem}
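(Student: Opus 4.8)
The plan is to reduce the statement to the pluricanonical representation together with a Jordan-type statement for the kernel of that representation. Fix a compact complex variety $X$ with $\kappa(X) \geq n-2$. Since $\mathrm{Bim}(X)$ is a bimeromorphic invariant, I may replace $X$ by a smooth model, and then by a suitable birational modification I would like to assume $X$ carries a good structure relative to its Iitaka fibration. The key exact sequence is
\[
1 \longrightarrow \mathrm{Bim}(X)_0 \longrightarrow \mathrm{Bim}(X) \xrightarrow{\ \rho\ } \mathrm{Im}(\rho) \longrightarrow 1,
\]
where $\rho$ is the pluricanonical representation (the action of $\mathrm{Bim}(X)$ on $\bigoplus_{m} H^0(X, mK_X)$, or on a single sufficiently divisible $H^0(X, mK_X)$), and $\mathrm{Bim}(X)_0 = \ker \rho$. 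By the main result announced in the abstract, $\mathrm{Im}(\rho)$ has bounded finite subgroups; in particular it is Jordan (indeed boundedly Jordan). So it remains to show $\mathrm{Bim}(X)_0$ is Jordan, with a Jordan constant that can be taken uniformly, and then assemble the two pieces: given a finite subgroup $G \leq \mathrm{Bim}(X)$, one gets $G \cap \mathrm{Bim}(X)_0$ of bounded index in $G$, and inside that a normal abelian subgroup of bounded index, which after passing to the normal core in $G$ is normal in $G$ of bounded index.

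The heart of the argument is therefore the analysis of $\mathrm{Bim}(X)_0$. Here I would use the Iitaka fibration $f \colon X \dashrightarrow Y$ with $\dim Y = \kappa(X) =: \kappa$. Elements of $\mathrm{Bim}(X)_0$ act fiberwise-compatibly: they descend to $\mathrm{Bim}(Y)$ and, more to the point, the subgroup acting trivially on $Y$ acts on the general fiber $F$, which is a compact complex manifold with $\kappa(F) = 0$. The dimension bound $\kappa \geq n-2$ forces $\dim F \leq 2$, so $F$ is a curve or surface of Kodaira dimension $0$ — i.e. an elliptic curve, or (a minimal model being) a torus, bielliptic surface, K3, or Enriques surface. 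One then needs: (i) the image of $\mathrm{Bim}(X)_0$ in $\mathrm{Bim}(Y)$ is controlled — this should follow because that image is covered by, or closely related to, a group acting on which one already has information, e.g. via $\mathrm{Bim}(Y)$ being Jordan when $\dim Y \leq 2$ by \cite{PSh21}, plus the observation that pluricanonical-trivial maps impose strong constraints on how $Y$ can be moved; and (ii) the subgroup fixing $Y$ pointwise and acting on fibers of Kodaira dimension zero is Jordan with a bound depending only on $\dim F \leq 2$. For (ii) one uses that $\mathrm{Bim}(F) = \mathrm{Aut}(F_{\min})$ up to finite index for these surface classes and elliptic curves, and $\mathrm{Aut}$ of a compact Kähler (indeed any compact complex) surface is Jordan; the relative/fiberwise version is handled by a standard argument passing to the generic fiber over the function field of $Y$ or by a specialization/spreading-out argument, together with boundedness of the finite automorphism groups of $K3$, Enriques, tori and bielliptic surfaces.

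The main obstacle I anticipate is twofold. First, making the "fiberwise" decomposition of $\mathrm{Bim}(X)_0$ precise in the non-Kähler, merely-bimeromorphic setting: the Iitaka fibration is only a meromorphic map, so I must pass to a resolution where it becomes a morphism and check that finite subgroups of $\mathrm{Bim}(X)_0$ can be made to act biregularly and compatibly with $f$ — this is where one invokes equivariant resolution of singularities and the fact that a finite group of bimeromorphic self-maps of a smooth compact complex manifold can be regularized on a common smooth model (a result of the type used in \cite{Go21}). Second, controlling the action on the base $Y$: $Y$ has dimension up to $n-2$, which is not a priori small, so one cannot just quote a low-dimensional Jordan theorem for $\mathrm{Bim}(Y)$; instead the point is that $\mathrm{Bim}(X)_0 \to \mathrm{Bim}(Y)$ has image that, because the total space maps to $Y$ with Kodaira-dimension-zero fibers and because the whole group acts trivially on pluricanonical forms, lands in a subgroup of $\mathrm{Bim}(Y)$ that is itself of pluricanonical-representation-trivial type on a variety of general type (the base of an Iitaka fibration is of general type up to birational modification) — and groups of pluricanonically-trivial bimeromorphic self-maps of a variety of general type are finite, or at least have bounded finite subgroups, again by the main theorem of this paper applied to $Y$. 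So the induction is really on the structure, not on dimension, and closing it cleanly — ensuring all the "bounded index" and "bounded order" constants chain together into one final bound depending only on $n$ — is the delicate bookkeeping step.
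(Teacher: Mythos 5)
Your high-level strategy is the same as the paper's — use the fact that $\rho(\mathrm{Bim}(X))$ and $\overline{\rho}(\mathrm{Bim}(X))$ have bounded finite subgroups (Theorem~\ref{prop-pluri-bfs}), and reduce to the Jordan property for the typical fiber of the pluricanonical map, which has dimension at most two and nonnegative Kodaira dimension. However, the execution is off in two places, and a third is left to ``delicate bookkeeping'' that is in fact the substance of the proof.

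First, your concern (i) about controlling the image of $\mathrm{Bim}(X)_0 = \ker\rho$ in $\mathrm{Bim}(Y)$ (where $Y$ is the image of the pluricanonical map) is unnecessary: that image is \emph{trivial}. The action of $\mathrm{Bim}(X)$ on $Y\subset\mathbb{P}(V^\vee)$ factors through $\overline{\rho}=p\circ\rho$, and $\ker\rho\subset\ker\overline{\rho}$, so $\ker\rho$ fixes $Y$ pointwise. Your discussion of ``pluricanonically-trivial self-maps of a variety of general type'' is therefore an unnecessary detour, and the assertion it leads to is beside the point. Second, the uniformity issue you flag at the end — a finite $G\subset\ker\rho$ acts faithfully only on \emph{some} generic fiber $F_p$, and one must ensure the Jordan constants of $\mathrm{Bim}(F_p)$ can be chosen independently of $p$ — is exactly what is not worked out in your proposal, and it is the genuine technical content. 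The paper does not reprove this; it invokes \cite[Lemma~3.2]{PSh21b} (Lemma~\ref{lem-bfs+jordan} here): for a $\mathrm{Bim}(X)$-equivariant dominant meromorphic map $\sigma\colon X\dashrightarrow Y$ whose image in $\mathrm{Bim}(Y)$ has bounded finite subgroups, Jordanness of $\mathrm{Bim}(F')$ for components $F'$ of a typical fiber implies Jordanness of $\mathrm{Bim}(X)$. Combined with Theorem~\ref{prop-pluri-bfs} (for the image in $\mathrm{Bim}(Y)$) and \cite[Lemma~2.9]{PSh21b} (giving $\kappa(F)\ge 0$, not $\kappa(F)=0$ as you wrote, which is enough to rule out $\mathbb{P}^1\times(\text{curve})$ and invoke \cite[Theorem~1.7]{PSh21}), this gives the entire proof in a few lines. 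Without that lemma or a proof of it, your argument has a real gap at precisely the step you label as bookkeeping.
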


\begin{corollary}
\label{main-cor-3} 
Let $X$ be a $3$-dimensional compact complex variety with $\kappa(X)> 0$, then the group~$\mathrm{Bim}(X)$ is Jordan. 
\end{corollary}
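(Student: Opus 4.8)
The plan is to obtain the corollary as an immediate specialization of Theorem~\ref{main-thm-2}. First I would recall that for a compact complex variety $X$ of dimension $n$ the Kodaira dimension $\kappa(X)$ takes values in $\{-\infty\}\cup\{0,1,\dots,n\}$; in particular, when $n=3$ the hypothesis $\kappa(X)>0$ is equivalent to $\kappa(X)\in\{1,2,3\}$, i.e.\ $\kappa(X)\geq 1$. Since $n-2=1$ in this case, the condition $\kappa(X)\geq n-2$ required by Theorem~\ref{main-thm-2} is satisfied.

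Applying Theorem~\ref{main-thm-2} with $n=3$ then yields that $\mathrm{Bim}(X)$ is Jordan, which is exactly the assertion of the corollary. There is no genuine obstacle in this deduction: the corollary is precisely the $n=3$ instance of the main theorem, recorded separately because it is the case of greatest interest and because it sharpens Theorem~\ref{thm-gol} by removing the K\"ahler hypothesis — at the price, unavoidable with the present method, of excluding the borderline value $\kappa(X)=0$. All the substantive content — the boundedness of finite subgroups in the image of the pluricanonical representation, together with an Iitaka-fibration argument reducing the study of $\mathrm{Bim}(X)$ to lower-dimensional base data and to automorphisms of the general fiber — is carried out in the proof of Theorem~\ref{main-thm-2} and need not be repeated here.
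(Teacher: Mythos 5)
Your deduction is correct and is exactly what the paper intends: after proving Theorem~\ref{main-thm-2}, the paper simply states that Corollary~\ref{main-cor-3} ``follows immediately,'' and your observation that $\kappa(X)>0$ for a threefold is the same as $\kappa(X)\geq 1 = n-2$ is precisely the intended specialization.
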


Our proof is based on the study of bounded finite subgroup property for the pluricanonical representation of the group of bimeromorphic self-maps. More precisely, let us denote by $\rho$ the \emph{pluricanonical representation} of the group $\mathrm{Bim}(X)$, that is, the action of $\mathrm{Bim}(X)$ on the space of holomorphic $m$-forms. Also, by $\overline{\rho}(\mathrm{Bim}(X))$ we denote the projectivization of the pluricanonical representation which acts on image of the pluricanonical map, see section \ref{subsec-pluricanonical-rep} for details. For convenience, we call it the \emph{projective pluricanonical representation}. We say that a group has \emph{bounded finite subgroups}, if the cardinality of any finite subgroup of such group is bounded by a constant which is independent of the subgroup. Then we prove the following

\begin{theorem}
\label{prop-pluri-bfs}
Let $X$ be a compact complex manifold. Then both the image of the pluricanonical representation $\rho(\mathrm{Bim}(X))$ and the image of the projective pluricanonical representation $\overline{\rho}(\mathrm{Bim}(X))$ have bounded finite subgroups.
\end{theorem}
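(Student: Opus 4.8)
The plan is to combine an integration (volume form) argument with the geometry of the Iitaka fibration, and to induct on $\dim X$. Fix $m\geq 1$ with $V:=H^0(X,mK_X)\neq 0$ (for the $\overline\rho$ statement one will want $m$ large and divisible). For $\omega\in V$ the expression $\mu_\omega:=(\omega\wedge\bar\omega)^{1/m}$ is a well-defined nonnegative measure on $X$, locally of the form $|g|^{2/m}$ times Lebesgue measure with $g$ holomorphic, hence integrable on the compact manifold $X$; so $N(\omega):=\int_X\mu_\omega$ is a continuous function on $V$ with $N(\lambda\omega)=|\lambda|^{2/m}N(\omega)$ and $N(\omega)=0\iff\omega=0$. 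Since a bimeromorphic self-map of $X$ is a biholomorphism off a set of measure zero, $N$ is invariant under $\rho(\mathrm{Bim}(X))$. Therefore $\{N\leq 1\}$ is a compact neighbourhood of $0$ preserved by $\rho(\mathrm{Bim}(X))$, so $\rho(\mathrm{Bim}(X))$ lies in a compact subgroup of $\mathrm{GL}(V)$ and $\overline\rho(\mathrm{Bim}(X))$ in a compact subgroup of $\mathrm{PGL}(V)$; after averaging a Hermitian form we may assume both act by (projective) unitary transformations. By Jordan's theorem for $\mathrm{GL}(V)$ this already gives the Jordan property for these groups, but not yet the boundedness of finite subgroups (e.g.\ $\mathrm{U}(1)$ is compact but has unbounded finite subgroups), so more structure is needed.

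Next I would treat $\overline\rho$. For $m$ large and divisible the pluricanonical map $\phi_{|mK_X|}\colon X\dashrightarrow\mathbb{P}(V^*)$ is, up to birational equivalence, the Iitaka fibration $\pi\colon X\dashrightarrow Y$ ($\dim Y=\kappa(X)$), and its image $Y_m\subset\mathbb{P}(V^*)$ is a \emph{projective} variety birational to $Y$. The group $\mathrm{Bim}(X)$ acts on $Y_m$ through $\overline\rho$, and $\overline\rho(\mathrm{Bim}(X))\hookrightarrow\mathrm{Bir}(Y)$ since a linear automorphism of $\mathbb{P}(V^*)$ that fixes the non-degenerate $Y_m$ pointwise is the identity. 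Equipping $Y$ with the discriminant $\mathbb{Q}$-divisor $\Delta$ of $\pi$ coming from the canonical bundle formula (available for complex spaces by work of Nakayama), one has $\kappa(Y,K_Y+\Delta)=\kappa(X)=\dim Y$, i.e.\ $(Y,\Delta)$ is a projective pair of log general type, and $\mathrm{Bim}(X)$ preserves this structure. Finiteness of birational self-maps of a projective pair of log general type then shows that $\overline\rho(\mathrm{Bim}(X))$ is finite; in particular it has bounded finite subgroups.

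It remains to bound finite subgroups of $\rho(\mathrm{Bim}(X))$. Since $\overline\rho(\mathrm{Bim}(X))$ is finite, it suffices to bound the torsion of the normal subgroup $\Lambda:=\ker\big(\rho(\mathrm{Bim}(X))\to\overline\rho(\mathrm{Bim}(X))\big)$, which consists of scalars and lies in $\mathrm{U}(1)$ by the first paragraph. An element $\phi$ with $\rho(\phi)=\zeta\cdot\mathrm{id}_V$ acts trivially on $Y$, hence preserves every fibre of $\pi$; writing $H^0(X,m'K_X)=H^0\big(Y,\pi_*\mathcal{O}_X(m'K_X)\big)$ with $\pi_*\mathcal{O}_X(m'K_X)$ a line bundle on $Y$ for $m'$ divisible enough, $\phi$ acts on this line bundle by a constant which, on the subspace generated by products of elements of $V$, equals $\zeta^{m'/m}$. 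Restricting $\phi$ to a general fibre $F$ (so $\kappa(F)=0$) identifies a fixed power $\zeta^{k}$ with the scalar by which $\phi|_F\in\mathrm{Bim}(F)$ acts on the one-dimensional space $H^0(F,kmK_F)$; since the kernel of $\zeta\mapsto\zeta^{k}$ on $\mathrm{U}(1)$ has order $\le k$, the torsion of $\Lambda$ is bounded in terms of $k$ and the torsion of the pluricanonical scalars of $\mathrm{Bim}(F)$. The latter is bounded by the induction hypothesis applied to $F$ (the case $\kappa<0$ being vacuous, and $\dim = 0$ trivial), which finishes the induction, hence the theorem, once the base case is settled.

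The main obstacle is precisely that base case, $\kappa(X)=0$: then $Y$ is a point, the reduction to an Iitaka fibre is vacuous, and one must bound the torsion of the character $\mathrm{Bim}(X)\to\mathbb{C}^*$ on the one-dimensional space $H^0(X,mK_X)$ directly. Here I would pass to the algebraic reduction $a\colon X\dashrightarrow Z$ with $Z$ projective: one shows the pluricanonical action factors, up to a controlled finite ambiguity, through $\mathrm{Bir}(Z)$, where it can be analysed arithmetically — $Z$ may be taken defined over $\overline{\mathbb{Q}}$, the eigenvalue $\zeta$ of an element of $\mathrm{Bir}(Z)$ acting on sections of a line bundle is then an algebraic integer, and relative compactness of $\rho(\mathrm{Bim}(Z^\sigma))$ for every conjugate variety $Z^\sigma$ forces all archimedean conjugates of $\zeta$ onto the unit circle, so $\zeta$ is a root of unity of bounded order by Kronecker's theorem — together with an induction on the fibre of $a$ (whose dimension drops) and, in the cases where it is needed, abundance for $\kappa=0$ (making a power of $K_X$ holomorphically trivial, so that $\zeta$ becomes an eigenvalue of an element of $\mathrm{GL}(H^n(X,\mathbb{Z}))$ and integrality is automatic). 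Organising the compatibilities between the canonical bundle formula, the two fibrations, and the unitary normalisation is where the technical work lies.
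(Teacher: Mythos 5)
Your approach diverges completely from the paper's, and unfortunately it runs into serious gaps that the paper's argument is specifically designed to avoid.

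The most important problem is in your second paragraph, where you claim that $\overline\rho(\mathrm{Bim}(X))$ is \emph{finite}. That is precisely the open question the paper poses at the end of the introduction (``Does there exist a compact complex manifold $X$ such that $\overline{\rho}(\mathrm{Bim}(X))$ is infinite?''), so the paper's author does not know the answer even in the generality you are assuming. Your argument for finiteness relies on (a) a canonical bundle formula for the Iitaka fibration $\pi\colon X\dashrightarrow Y$ producing a boundary $\Delta$ on $Y$ with $\kappa(Y,K_Y+\Delta)=\kappa(X)$, (b) the fact that $\mathrm{Bim}(X)$ preserves the pair $(Y,\Delta)$, and (c) finiteness of $\mathrm{Bir}$ of a projective pair of log general type. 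All three are delicate. The canonical bundle formula in the form you want is established for algebraic and (with work) for Kähler fibrations; for a general compact complex manifold (which is the hypothesis here, and need not be Moishezon or Kähler) there is no such result, and neither Nakayama's work nor \cite{HX16} (which is for projective log pairs) applies. Similarly, your base case $\kappa(X)=0$ invokes abundance (holomorphic triviality of a power of $K_X$), which is a hard open problem already for compact Kähler threefolds and is simply false or unknown in the non-Kähler compact setting; and the reduction to a variety over $\overline{\mathbb{Q}}$ via the algebraic reduction presupposes algebraicity that a general compact complex manifold does not have. So the induction never gets off the ground.

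The paper avoids all of these global structure results. Its strategy is purely ``eigenvalue-theoretic'': for any $g\in\mathrm{Bim}(X)$ and any eigenvalue $\alpha$ of $\rho(g)$, it bounds the degree $[\mathbb{Q}(\alpha):\mathbb{Q}]$ uniformly in $g$, and then applies the effective Burnside theorem \cite[Theorem 1]{HP76} (Theorem~\ref{thm-effective-burnside}) plus Lemma~\ref{lem-bounded-element-order} and Lemma~\ref{lem-bfs-image-bfs} to conclude bounded finite subgroups for both $\rho$ and $\overline\rho$. The degree bound is obtained by lifting $g$ to a bimeromorphic automorphism of the $m$-canonical cover $Y'_\phi$ attached to an eigenvector $\phi$ (Construction~\ref{constr-cover}), resolving to a smooth model $Y_\phi$, and noting that an $m$-th root $\beta$ of $\alpha$ becomes an eigenvalue on $H^0(Y_\phi,K_{Y_\phi})\hookrightarrow H^n(Y_\phi,\mathbb{Z})/\mathrm{tors}\otimes\mathbb{C}$; hence $\beta$ is an algebraic integer of degree at most $b_n(Y_\phi)$. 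The genuinely technical step, which your sketch has no counterpart to, is Lemma~\ref{lem-bounded-betti}: by constructing a \emph{global} $m$-canonical cover $Y'\to\mathbb{P}(V)$ and a fiberwise embedded resolution on a stratification of the base (Lemma~\ref{lem-open-subset}), one shows that the Betti number $b_n(Y_\phi)$ can be taken uniformly bounded over all $\phi$. No Iitaka fibration, canonical bundle formula, abundance, or arithmetic model is needed.

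Your first paragraph, the unitarity of $\rho(\mathrm{Bim}(X))$ via $N(\omega)=\int_X(\omega\wedge\bar\omega)^{1/m}$, is correct and is in fact a classical observation (it is how one sees that $\rho$ lands in a compact group), but as you yourself note it is strictly weaker than bounded finite subgroups. The only piece of your last paragraph that reflects the paper's mechanism is the sentence ``the eigenvalue $\zeta\dots$ is an algebraic integer, and relative compactness forces $\zeta$ onto the unit circle, so $\zeta$ is a root of unity of bounded order by Kronecker'': this is morally the right idea, but to make it work one must (i) show the eigenvalue is an algebraic integer without assuming algebraicity of $X$, and (ii) bound its degree uniformly. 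The paper does exactly this via the $m$-canonical cover; your proposal instead tries to import algebraicity from fibrations that need not exist.
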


Combining Theorem \ref{thm-gol} and Theorem \ref{prop-pluri-bfs}, we deduce
\begin{corollary}
\label{main-cor-4}
Let $X$ be an $n$-dimensional compact K\"ahler variety with $\kappa(X)\geq n-3$. Then the group $\mathrm{Bim}(X)$ is Jordan.
\end{corollary}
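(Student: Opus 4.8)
The plan is to run the Iitaka-fibration argument underlying Theorem~\ref{main-thm-2}, but to feed in Golota's Theorem~\ref{thm-gol} at the level of the general fibre. Since $\mathrm{Bim}(X)$ is a bimeromorphic invariant, I first replace $X$ by a compact Kähler manifold. If $\kappa(X)\ge n-2$ we are already done by Theorem~\ref{main-thm-2}, so we may assume $\kappa(X)=n-3$; as Kodaira dimension is either $-\infty$ or a nonnegative integer, this forces $n\ge 3$, and for $n=3$ the statement is exactly Theorem~\ref{thm-gol}. So assume $n\ge 4$ and $\kappa(X)=n-3\ge 1$. Pass to a model on which the Iitaka fibration is a morphism $\pi\colon X\to Y$ with connected fibres and $\dim Y=\kappa(X)=n-3$; a general fibre $F$ is then a smooth compact Kähler threefold with $\kappa(F)=0$ (it is a submanifold of the Kähler manifold $X$, and its Kodaira dimension vanishes by the defining property of the Iitaka fibration).

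For $m$ sufficiently large and divisible the image of the $m$-th pluricanonical map is bimeromorphic to $Y$ and carries the action of the projective pluricanonical representation $\overline{\rho}$, the map $\pi$ is equivariant for the induced action of $\overline{\rho}(\mathrm{Bim}(X))$ on $Y$, and the kernel of $\overline{\rho}$ is exactly the group $K$ of bimeromorphic self-maps of $X$ over $Y$ (those preserving the fibres of $\pi$). Thus there is an exact sequence
\[ 1\longrightarrow K\longrightarrow \mathrm{Bim}(X)\xrightarrow{\ \overline{\rho}\ }\overline{\rho}(\mathrm{Bim}(X))\longrightarrow 1. \]
By Theorem~\ref{prop-pluri-bfs} the quotient has bounded finite subgroups, and an extension of a Jordan group by a group with bounded finite subgroups is Jordan: if $G\le \mathrm{Bim}(X)$ is finite, then $G/(G\cap K)$ has order bounded by the bound from Theorem~\ref{prop-pluri-bfs}, $G\cap K$ is normal in $G$ and has a normal abelian subgroup of bounded index (since $K$ is Jordan), and intersecting its boundedly many $G$-conjugates yields a normal abelian subgroup of $G$ of bounded index. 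Hence it suffices to prove that $K$ is Jordan.

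To this end let $G\le K$ be finite. After a $G$-equivariant resolution of indeterminacy we may assume $G$ acts biregularly on (a model of) $X$ and preserves every fibre of $\pi$; restricting to a general point $y\in Y$ yields a \emph{faithful} action of $G$ on the fibre $F=X_y$ (an element acting trivially on a dense union of fibres is the identity), so $G$ embeds into $\mathrm{Bim}(F)$, which is Jordan by Theorem~\ref{thm-gol} because $F$ is a compact Kähler threefold with $\kappa(F)=0$; thus $G$ has an abelian subgroup of bounded index. The point that needs care — and where I expect the main work to lie — is that this bound must be made \emph{independent of $G$}: one needs a single Jordan constant valid for $\mathrm{Bim}(F)$ as $F$ ranges over the general fibres of $\pi$. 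Since $\pi$ is smooth over a dense Zariski-open subset of $Y$, all such $F$ lie in one deformation family of Kähler threefolds with $\kappa=0$; combining abundance for Kähler threefolds with the Beauville--Bogomolov-type decomposition, each $F$ is, up to a bimeromorphic modification and a finite étale cover of bounded degree, a product of a complex torus, K3 surfaces and a Calabi--Yau threefold whose relevant numerical invariants are deformation-invariant, and for each of these the Jordan constant of $\mathrm{Aut}$ (hence of $\mathrm{Bim}$, up to the bounded data of the cover and the modification) depends only on those invariants. Granting this uniform bound, every finite subgroup of $K$ has an abelian subgroup of index at most a fixed constant, so $K$ is Jordan, and by the extension property above $\mathrm{Bim}(X)$ is Jordan.
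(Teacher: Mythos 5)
Your overall strategy matches the paper's: reduce to $\kappa(X)=n-3$ via Theorem~\ref{main-thm-2}, note that a typical fibre $F$ of the pluricanonical map is a compact K\"ahler threefold with $\kappa(F)\ge 0$ (Lemma~\ref{lem-fiber-non-negative-kodaira-dim}), invoke Theorem~\ref{thm-gol} to get that $\mathrm{Bim}(F)$ is Jordan, and use Theorem~\ref{prop-pluri-bfs} to bound finite subgroups of $\overline{\rho}(\mathrm{Bim}(X))$. Where you diverge is in the final step. You correctly identify a potential uniformity issue --- a Jordan constant for $\mathrm{Bim}(F)$ valid simultaneously for all the general fibres to which one might need to restrict a finite subgroup of the kernel --- but your proposed resolution (abundance for K\"ahler threefolds, a Beauville--Bogomolov-type decomposition, boundedness of the covering degree, deformation invariance of Jordan constants of the factors) is neither proven nor precisely stated, and the K\"ahler/singular Beauville--Bogomolov theorem is a delicate recent result that cannot be invoked as a black box in the form you need. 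The phrase ``granting this uniform bound'' grants exactly what still has to be proven, so as written that paragraph is a genuine gap.

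The paper sidesteps all of this by quoting Lemma~\ref{lem-bfs+jordan}, which is \cite[Lemma~3.2]{PSh21b}: given a $\mathrm{Bim}(X)$-equivariant dominant meromorphic map $\sigma\colon X\dashrightarrow Y$ such that $\mathrm{Bim}(F')$ is Jordan for every irreducible component $F'$ of a typical fibre and the image of $\mathrm{Bim}(X)$ in $\mathrm{Bim}(Y)$ has bounded finite subgroups, the group $\mathrm{Bim}(X)$ is Jordan. That lemma packages precisely the exact-sequence and restriction-to-a-fibre argument you sketch, uniformity included, and with it the corollary is an immediate consequence of Theorem~\ref{thm-gol}, Lemma~\ref{lem-fiber-non-negative-kodaira-dim} and Theorem~\ref{prop-pluri-bfs}. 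You should replace your closing speculation with an appeal to that lemma (or, if you want a self-contained argument, actually establish the uniform bound rather than asserting it).
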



In the proof of Theorem \ref{prop-pluri-bfs}, we follow the ideas used by Ueno in \cite[\S 14]{Ue75} where the following result, whose proof is attributed to Deligne, was established: 
\begin{theorem}[{\cite[Corollary 14.10]{Ue75}}]
\label{thm-finite-moishezon}
Let $X$ be a compact complex manifold. Assume that~$X$ is Moishezon. Then the group $\rho(\mathrm{Bim}(X))$ is finite.
\end{theorem}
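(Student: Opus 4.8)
\emph{Strategy.} The plan is to run Deligne's argument from \cite[\S 14]{Ue75} proving Theorem~\ref{thm-finite-moishezon}, but to replace the input ``$X$ is algebraic'' (which there forced the image to be \emph{finite}) by the algebraicity of the \emph{image} of the pluricanonical map, which is what survives for an arbitrary compact complex manifold and which still suffices for bounded finite subgroups. Fix $m\ge 1$ and set $V=H^{0}(X,mK_{X})$. We may assume $\kappa(X)\ge 0$ (otherwise $V=0$) and $\kappa(X)<\dim X$ (otherwise $X$ is Moishezon and $\rho(\mathrm{Bim}(X))$ is finite by Theorem~\ref{thm-finite-moishezon}); by comparing the representations for different $m$ via tensor powers of forms we may take $m$ large and divisible, so that $\phi_{m}\colon X\dashrightarrow W\subseteq\mathbb{P}(V^{\vee})$ is the Iitaka fibration onto the closure $W$ of its image. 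Then $\mathrm{Bim}(X)$ acts linearly on $\mathbb{P}(V^{\vee})$, preserving $W$, and the kernel of $\rho(\mathrm{Bim}(X))\to\overline{\rho}(\mathrm{Bim}(X))$ is the abelian group $A=\rho(\mathrm{Bim}(X))\cap(\mathbb{C}^{*}\cdot\mathrm{id})$, so any finite subgroup $\Gamma\le\rho(\mathrm{Bim}(X))$ satisfies $|\Gamma|\le|A_{\mathrm{tors}}|\cdot|\overline{\rho}(\mathrm{Bim}(X))|$. Hence it suffices to show that $\overline{\rho}(\mathrm{Bim}(X))$ is finite and that the torsion subgroup $A_{\mathrm{tors}}$ of $A$ is finite.

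\emph{Finiteness of $\overline{\rho}(\mathrm{Bim}(X))$.} Two ingredients are used. First, the volume density trick of \cite[\S 14]{Ue75}: an $m$-canonical form $\omega$, locally $f\,(dz_{1}\wedge\dots\wedge dz_{n})^{\otimes m}$, determines the continuous volume density $|f|^{2/m}\bigl(\tfrac{i}{2}\bigr)^{n}dz_{1}\wedge d\bar z_{1}\wedge\dots\wedge dz_{n}\wedge d\bar z_{n}$ on $X$, whose total mass $N(\omega)$ is finite and positive for $\omega\ne0$ (as $X$ is compact), continuous, and homogeneous of weight $2/m$; a bimeromorphic self-map of $X$ is a biholomorphism off analytic subsets of measure zero and pulls this density back to itself, so $N$ is $\mathrm{Bim}(X)$-invariant and $\{N\le1\}$ is a compact balanced body preserved by $\rho(\mathrm{Bim}(X))$. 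Thus $\rho(\mathrm{Bim}(X))$ lies in a compact subgroup of $\mathrm{GL}(V)$; in particular $A\subseteq U(1)\cdot\mathrm{id}$ and $\overline{\rho}(\mathrm{Bim}(X))$ preserves a Hermitian metric on $\mathbb{P}(V^{\vee})$. Second, and here we leave the Moishezon setting: even though $X$ need not be algebraic, the image $W$ is, since a closed analytic subvariety of projective space is projective by Chow's theorem; moreover the canonical bundle formula for the Iitaka fibration (Fujino--Mori, Koll\'ar) provides a canonically defined effective $\mathbb{Q}$-divisor $\Delta$ on $W$ with $(W,\Delta)$ klt and $K_{W}+\Delta$ big. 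Since the Iitaka fibration and the pair $(W,\Delta)$ are intrinsic to $X$, the group $\mathrm{Bim}(X)$ acts on $(W,\Delta)$ by birational automorphisms and $\overline{\rho}$ factors through $\mathrm{Bim}(X)\to\mathrm{Bir}(W,\Delta)$. As $(W,\Delta)$ has log general type, $\mathrm{Bir}(W,\Delta)$ is finite (the logarithmic analogue of Matsumura's theorem, via birationality of the pluri-log-canonical maps of $(W,\Delta)$ for large index); hence $\overline{\rho}(\mathrm{Bim}(X))$ is finite.

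\emph{The scalar part.} If $g\in\mathrm{Bim}(X)$ has $\rho(g)$ scalar, then $g$ acts trivially on $W$, hence preserves every fibre of the Iitaka fibration and restricts to $g|_{F}\in\mathrm{Bim}(F)$ on a general fibre $F$, with $\kappa(F)=0$; restricting $m$-forms gives a nonzero $\langle g\rangle$-equivariant map $V\to H^{0}(F,mK_{F})$, so the scalar by which $\rho(g)$ acts lies in the image of the one-dimensional pluricanonical representation of $\mathrm{Bim}(F)$. If $\kappa(X)\ge1$ then $\dim F<\dim X$, and induction on $\dim X$ shows that image has finite torsion, whence $A_{\mathrm{tors}}$ is finite. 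The remaining case $\kappa(X)=0$ is the crux: here $W$ is a point, $\rho(\mathrm{Bim}(X))\subseteq\mathbb{C}^{*}\cdot\mathrm{id}$, and one must show this group has finite torsion. For a finite-order $g\in\mathrm{Bim}(X)$ one can realize $g$ biholomorphically on a smooth model $X'$ by equivariant resolution; since $\int_{X'}i^{n^{2}}\omega\wedge\bar\omega>0$ for a nonzero holomorphic $n$-form $\omega$, the de Rham class of $\omega$ is nonzero and $H^{0}(X',K_{X'})=H^{n,0}(X')$ embeds $g^{*}$-equivariantly into $H^{n}(X',\mathbb{C})$; as $g^{*}$ preserves the lattice $H^{n}(X',\mathbb{Z})/\mathrm{tors}$ and has finite order, its eigenvalue on $H^{0}(X',K_{X'})$ is a root of unity of order bounded in terms of $b_{n}(X')$, and on $H^{0}(X',mK_{X'})=\mathbb{C}\cdot\omega^{\otimes m}$ the scalar is its $m$-th power. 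Passing from $p_{g}(X')=0$ to $p_{g}(X')\ge1$ via a finite cover that trivializes a torsion multiple of $K_{X'}$, keeping $b_{n}$ of the model under control, and upgrading from finite-order to arbitrary elements of $A$ are the remaining technical points; this $\kappa(X)=0$ analysis is where I expect the main difficulty to lie. With $A_{\mathrm{tors}}$ finite, the inequality above bounds the finite subgroups of $\rho(\mathrm{Bim}(X))$, and the finiteness of $\overline{\rho}(\mathrm{Bim}(X))$ gives the claim for the projective pluricanonical representation.
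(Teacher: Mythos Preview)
Your proposal does not address the stated theorem. Theorem~\ref{thm-finite-moishezon} asserts that for a \emph{Moishezon} compact complex manifold $X$ the group $\rho(\mathrm{Bim}(X))$ is \emph{finite}; the paper does not prove this result but quotes it from \cite[Corollary~14.10]{Ue75}. Your argument instead drops the Moishezon hypothesis, aims only for bounded finite subgroups, and even invokes Theorem~\ref{thm-finite-moishezon} itself as a reduction step (``otherwise $X$ is Moishezon and $\rho(\mathrm{Bim}(X))$ is finite by Theorem~\ref{thm-finite-moishezon}''). What you have written is therefore a strategy for Theorem~\ref{prop-pluri-bfs}, not for Theorem~\ref{thm-finite-moishezon}, and it is circular as a proof of the latter.

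Viewed as a route to Theorem~\ref{prop-pluri-bfs}, your approach is genuinely different from the paper's but has real gaps. You propose to show that $\overline{\rho}(\mathrm{Bim}(X))$ is \emph{finite} by applying the Fujino--Mori canonical bundle formula to the Iitaka fibration $X\dashrightarrow W$ to obtain a klt pair $(W,\Delta)$ of log general type, and then invoking a log Matsumura-type finiteness. The canonical bundle formula, however, is established for algebraic (or at best K\"ahler) fibre spaces; for an arbitrary compact complex $X$ no such formula is available, and the paper explicitly leaves the finiteness of $\overline{\rho}(\mathrm{Bim}(X))$ as an open question. Your scalar analysis in the $\kappa(X)=0$ case is also, by your own admission, incomplete: controlling $b_n$ of an equivariant resolution uniformly over all finite cyclic subgroups, and passing from $p_g=0$ to $p_g\ge 1$, is precisely the difficulty. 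The paper sidesteps both issues by a single uniform mechanism: it bounds $[\mathbb{Q}(\alpha):\mathbb{Q}]$ for every eigenvalue $\alpha$ of every $g\in\rho(\mathrm{Bim}(X))$ via Ueno's $m$-canonical cover together with a fiberwise embedded resolution having uniformly bounded Betti numbers (Lemmas~\ref{lem-local-bounding} and~\ref{lem-bounded-betti}), and then concludes through the effective Burnside theorem via Lemma~\ref{lem-bfs-image-bfs}. This yields bounded finite subgroups directly, with no structure theorem on the Iitaka base and no separate treatment of $\kappa(X)=0$.
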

In contrast, in Theorem \ref{prop-pluri-bfs} we do not need the assumption that $X$ is Moishezon, but instead of finiteness, we establish a weaker property of $\rho(\mathrm{Bim}(X))$. However, this is the best we can get as there exist compact complex manifolds with infinite $\rho(\mathrm{Bim}(X))$, see Example \ref{example-infinite}. We can also ask about finiteness of the group $\overline{\rho}(\mathrm{Bim}(X))$. If $X$ is a compact complex surface, then $\overline{\rho}(\mathrm{Bim}(X))$ is finite. Indeed, 
finiteness of $\overline{\rho}(\mathrm{Bim}(X))$ is trivial for $\kappa(X)=0$, 
for $\kappa(X)=1$ its finiteness is proven in \cite[Lemma 8.2]{PSh21}, 
and for $\kappa(X)=2$ the group $\mathrm{Bim}(X)$ is itself finite by \cite[Corollary~14.3]{Ue75}.  
In higher dimensions, we do not know about finiteness results for $\overline{\rho}(\mathrm{Bim}(X))$ when the Kodaira dimension is not maximal possible.  
Hence we formulate the following

\begin{question}
Does there exist a compact complex manifold $X$ such that the image of the projective pluricanonical representation $\overline{\rho}(\mathrm{Bim}(X))$ is infinite?
\end{question}

We refer to \cite{HX16} and references therein for finiteness results in the case of projective pairs.

\

\textbf{Acknowledgements.} 
The work was performed at the Steklov International Mathematical Center and supported by the Ministry of Science and Higher Education of the Russian Federation (agreement no. 075-15-2022-265), by the HSE University Basic Research Program, Russian Academic Excellence Project ``5-100'', and the Simons Foundation. He is Young Russian Mathematics award
winner and would like to thank its sponsors and jury. The author is grateful to Constantin Shramov for suggesting the problem, careful reading of the draft of the paper, and useful discussions. The author thanks  Andrey Ryabichev, Andrey Soldatenkov, Aleksei Golota and Alexander Kuznetsov for helpful conversations. 

\section{Preliminaries}
We work over the field of complex numbers. We refer the reader to \cite{Ue75} for the basic facts and definitions. 

We recall some of them. By a \emph{complex variety} we mean an irreducible reduced complex space. 
 A smooth complex variety is called a \emph{complex manifold}. A \emph{complex surface} is a complex manifold of dimension~$2$. By a (Zariski) open subset of complex manifold $X$ we mean a subset of the form~$X \setminus Z$, where $Z$ is a closed analytic subset in $X$. By a \emph{typical point} of $X$ we mean a point in some non-empty Zariski open subset of $X$.

A proper surjective holomorphic map $f\colon X\to Y$ of reduced complex spaces is called a \emph{modification} if there exist  nowhere dense closed analytic subsets $V\subset X$ and $W\subset Y$ 
such that $f$ restricts to a biholomorphic map $X\setminus V\to Y\setminus W$. A \emph{meromorphic map} $f\colon X\dashrightarrow Y$ of reduced complex analytic spaces $X$ and $Y$ is a holomorphic map defined outside a nowhere dense subset such that the closure of its graph~$\overline{\Gamma_f}\subset X\times Y$ is a closed analytic subset of $X\times Y$, and the natural projection~$\overline{\Gamma_f} \to X$ is a modification. A meromorphic map $f$ is called \emph{bimeromorphic} if the natural projection $\overline{\Gamma_f}\to Y$ is a modification as well. 

Given a reduced complex space $X$, by $\mathrm{Bim}(X)$ we denote its group of bimeromorphic self-maps, and by $\mathrm{Aut}(X)$ we denote its group of biholomorphic self-maps.

\begin{example}
A bimeromorphic self-map of an open subset $U$ in a complex variety~$X$ may not induce a bimeromorphic self-map of $X$. Indeed, by \cite[Section 2]{VT87} there exist complex manifolds $X_1$ and $X_2$ such that they are two non-bimeromorphic compactifications of $(\mathbb{C}^\times)^2$. Put $(\mathbb{C}^\times)^2=U_1\subset X_1$, $(\mathbb{C}^\times)^2=U_2\subset X_2$, and 
\[
X = X_1 \times X_2 \supset U = U_1\times U_2.
\]
Define a biholomorphic map $f$ on $U=U_1\times U_2$ by the formula $f(x_1, x_2)=(x_2, x_1)$ when $x_i\in U_i$ for $i=1,2$, and $U_1$ is identified with $U_2$. Assume that $f$ induces a bimeromomorphic map on $X$. Consider the closure of its graph
\[
\overline{\Gamma_f} \subset X\times X = X_1\times X_2 \times X_1\times X_2.
\]
Then both projections 
\[
p_i=\mathrm{pr}_i|_{\overline{\Gamma_f}}\colon \overline{\Gamma_f}\to X=X_1\times X_2
\] 
are modifications. 
Consider the subset $U_1\times \{q\}\subset X_1\times \{q\}$ where $q\in U_2$. We have $f(U_1\times \{q\})=\{q\}\times U_2$. Hence 
\[
p_1^{-1}(U_1\times \{q\}) = U_1\times \{q\}\times \{q\}\times U_1\subset  \overline{\Gamma_f},
\]
Put
\[
\Gamma_g={X_1\times \{q\}\times \{q\}\times X_2} \cap \overline{\Gamma_f}.
\]
Consider $\Gamma_g$ as a closed analytic subset that defines a graph of a bimeromorphic map $g\colon X_1\dashrightarrow X_2$. Indeed, this follows from the fact that the proper maps $\Gamma_g\to X_i$ are modifications for $i=1,2$. This contradiction shows that $f$ does not induce a bimeromorphic map on $X$.
\end{example}

\subsection{Pluricanonical representation}
\label{subsec-pluricanonical-rep} 
Let $X$ be a complex manifold. Assume that for the Kodaira dimension of $X$ we have $\kappa(X)\geq 0$. For any $m\geq 1$, consider the map
\[
\sigma_m\colon X \dasharrow \mathbb{P}(\mathrm{H}^0(X, \oo_X(mK_X))^\vee).
\]
For a sufficiently big and divisible $m$, we have $\dim \sigma_m(X)=\kappa(X)$ and the fibers of $\sigma_m$ are connected. For such $m$, we put $\sigma=\sigma_m$. Passing to a modification of $X$, we may assume that $\sigma$ is a morphism. Obviously, this does not change the group $\mathrm{Bim}(X)$. Then a typical fiber of $\sigma$ is smooth, and hence irreducible (cf. \cite[Theorem 5.10]{Ue75}). 
We have the following 

\begin{proposition}[{see e.g. \cite[Lemma 6.3]{Ue75}}]
The group $\mathrm{Bim}(X)$ acts on $\sigma(X)$ biholomorphically.
\end{proposition}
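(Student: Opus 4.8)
The plan is to show that each $g\in\mathrm{Bim}(X)$ induces a linear automorphism of $V:=\mathrm{H}^0(X,\oo_X(mK_X))$, to observe that the pluricanonical map $\sigma$ intertwines the action of $g$ on $X$ with the induced projective transformation of $\mathbb{P}(V^\vee)$, and then to restrict this projective transformation to $\sigma(X)$. The output is a homomorphism $\mathrm{Bim}(X)\to\mathrm{Aut}(\sigma(X))$.

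First I would recall that pluricanonical forms are a bimeromorphic invariant. Given $g\in\mathrm{Bim}(X)$, resolve the closure of its graph (using resolution of singularities for complex spaces) to obtain a smooth compact complex manifold $W$ with two modifications $p,q\colon W\to X$ such that $g=q\circ p^{-1}$ as meromorphic maps. For a modification $\pi\colon W\to X$ of smooth compact complex manifolds one has $\pi_*\oo_W(mK_W)=\oo_X(mK_X)$, so that $\pi^*\colon V\to\mathrm{H}^0(W,\oo_W(mK_W))$ is an isomorphism. Hence $\rho(g):=(p^*)^{-1}\circ q^*\in\mathrm{GL}(V)$ is well defined, and on the dense Zariski open subset where $g$ is biholomorphic it coincides with the naive pullback of $m$-canonical forms. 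A direct check of the cocycle identity on composed graphs shows that, with the appropriate convention, the induced action on $\mathbb{P}(V^\vee)$ is a group homomorphism; write $\overline{\rho}(g)$ for the automorphism of $\mathbb{P}(V^\vee)$ attached to $g$.

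Next I would verify the equivariance of $\sigma$. Pick a dense Zariski open $U\subset X$ on which $g$ restricts to a biholomorphism, which is disjoint from the base locus of $|mK_X|$, and on which $\sigma$ is a morphism. For $x\in U$, trivializing $\oo_X(mK_X)$ near $x$, evaluation at $x$ gives a functional $\mathrm{ev}_x\in V^\vee$ with $\sigma(x)=[\mathrm{ev}_x]$, and $(\rho(g)s)(x)=s(g(x))\cdot J(x)$ for a nonzero local factor $J(x)$ coming from the Jacobian of $g$. Thus $\mathrm{ev}_x\circ\rho(g)$ is a nonzero multiple of $\mathrm{ev}_{g(x)}$, so in $\mathbb{P}(V^\vee)$ one gets $\sigma(g(x))=\overline{\rho}(g)(\sigma(x))$ for every $x\in U$. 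Since $\sigma(U)$ is dense in $\sigma(X)$ and $\sigma(X)\subset\mathbb{P}(V^\vee)$ is a closed (compact) analytic subset, it follows that $\overline{\rho}(g)(\sigma(X))\subseteq\sigma(X)$; applying this to $g^{-1}$ gives equality. Hence $\overline{\rho}(g)$ restricts to a biholomorphism of $\sigma(X)$ with inverse $\overline{\rho}(g^{-1})|_{\sigma(X)}$, and $g\mapsto\overline{\rho}(g)|_{\sigma(X)}$ is the desired homomorphism $\mathrm{Bim}(X)\to\mathrm{Aut}(\sigma(X))$.

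I expect the step requiring the most care to be the equivariance: one must track the Jacobian factors and the passage between $V$ and $V^\vee$ so that the transformations of $X$ and of $\mathbb{P}(V^\vee)$ are genuinely intertwined, and one must produce a single dense open set on which $g$, the chosen trivializations of $\oo_X(mK_X)$, and the morphism $\sigma$ are all well-behaved. The bimeromorphic invariance of pluricanonical forms is standard but rests on resolution of singularities for complex spaces together with the pushforward identity $\pi_*\oo_W(mK_W)=\oo_X(mK_X)$; the remaining points are formal.
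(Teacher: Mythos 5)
Your proposal is correct and follows the standard argument, which is precisely what the cited reference \cite[Lemma 6.3]{Ue75} does: use bimeromorphic invariance of pluricanonical forms (via resolution of the graph and $\pi_*\oo_W(mK_W)=\oo_X(mK_X)$ for a modification of smooth compact manifolds) to get a linear action on $V$, check that evaluation functionals intertwine the action on $X$ with the induced projective transformation of $\mathbb{P}(V^\vee)$, and conclude by density that $\sigma(X)$ is preserved. The paper does not reprove this and simply cites Ueno, and your reconstruction is a faithful account of that proof.
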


Put $V=\mathrm{H}^0(X, \oo_X(mK_X))$ and consider the induced homomorphism
\begin{equation}
\label{pluricanonical-rep}
\rho = \rho_m\colon \mathrm{Bim}(X)\to \mathrm{GL}(V),
\end{equation}
which is called a \emph{pluricanonical representation} of $\mathrm{Bim}(X)$. 
Consider the natural exact sequence of groups
\[
1 \to \mathbb{C}^\times \to \mathrm{GL}(V) \xrightarrow{p} \mathrm{PGL}(V)\to 1.
\]
Put $\overline{\rho}=p\circ \rho$. Then by the \emph{projective pluricanonical representation} of $\mathrm{Bim}(X)$ we mean the image~$\overline{\rho}(\mathrm{Bim}(X))$. Observe that the action of $\overline{\rho}(\mathrm{Bim}(X))$ on $\sigma(X)$ is faithful. 


\subsection{Group theory} We start with the following definition.
\begin{definition}
\label{def-jordan-bfs}
We say that a group $\Gamma$ has \emph{bounded finite subgroups}, if there is a constant~$C=~C(\Gamma)$ such that for any finite subgroup $G\subset \Gamma$ we have $|G|\leq C$.

Following \cite[Definition 2.1]{Po09}, we say that a group $\Gamma$ is \emph{Jordan} (or has \emph{Jordan property}) if there is a constant $J = J(\Gamma)$ such that every finite subgroup $G \subset \Gamma$ contains a normal abelian subgroup $A \subset G$ of index at most $J$.
\end{definition}

Note that the Jordan property (as well as the property to have bounded finite subgroups) behaves badly with respect to passing to the quotient group. Indeed, any group $G$ which does not enjoy this property can be realised as a quotient of a free group on some number of generators, and the free group clearly satisfies Jordan property (and has bounded finite subgroups) since the only finite subgroup in it is the trivial group. See also Example \ref{ex-quotient-bfs} in the case when $G$ is a subgroup of the general linear group. Now, let us make the following straightforward observation.
\begin{remark}
\label{lem-iff-jordan}
Suppose that the following sequence of groups is exact
\[
1\to G_1 \to G \to G_2 \to 1.
\]
Assume that $G_2$ has bounded finite subgroups. Then $G$ is Jordan if and only if $G_1$ is Jordan.
\end{remark}

\begin{example}
\label{thm-gl-bfs}
A classical result of C. Jordan says that for every positive integer $n$ the group~$\mathrm{GL}_n(\mathbb{C})$ is Jordan, see e.g. \cite[Theorem 36.13]{CR62}. As a consequence, any linear algebraic groups over an arbitrary field of characteristic zero is Jordan. 

The group $\mathrm{GL}_n(\mathbb{\mathbb{K}})$ has bounded finite subgroups for any field $\mathbb{K}$ which is finitely generated over~$\mathbb{Q}$. This theorem was proven by H. Minkowski, {see e.g. \cite[Theorem 5]{Ser07} and~\cite[\S 4.3]{Ser07}}. It follows that $\mathrm{GL}_n(\mathbb{Z})$ has bounded finite subgroups.
\end{example}

The next result can be seen as an effective version of Burnside theorem for subgroups of the general linear group. 
\begin{theorem}[{\cite[Theorem 1]{HP76}}]
\label{thm-effective-burnside}
Let $n$ and $d$ be positive integers. 
Let $\Gamma \subset \mathrm{GL}_n(\mathbb{C})$ be a subgroup. Suppose that for every $g \in \Gamma$ one has~$g^d = 1$. Then $\Gamma$ is finite and $|\Gamma|$ divides $d^n$.
\end{theorem}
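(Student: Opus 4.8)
The plan is to establish first that $\Gamma$ is finite --- this is the classical Burnside--Schur argument --- and then to sharpen finiteness to the divisibility $|\Gamma|\mid d^n$ by a second induction on $n$. The starting observation is that every $g\in\Gamma$ has minimal polynomial dividing $X^d-1$, which has $d$ distinct roots in $\mathbb C$; hence each $g\in\Gamma$ is diagonalizable with all eigenvalues among the $d$-th roots of unity. In particular the trace $\operatorname{Tr}(g)$ lies in the finite set $S$ of all sums of $n$ (not necessarily distinct) $d$-th roots of unity, and $|S|\le d^n$.

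For finiteness I induct on $n$. If $n=1$ then $\Gamma\subseteq\mu_d$, so $|\Gamma|\mid d$. Suppose $n\ge2$. If $\Gamma$ preserves a subspace $0\ne W\subsetneq\mathbb C^n$, apply the homomorphism $\Gamma\to\mathrm{GL}(W)\times\mathrm{GL}(\mathbb C^n/W)$: an element $g$ in its kernel satisfies $(g-1)\mathbb C^n\subseteq W$ and $(g-1)W=0$, hence $(g-1)^2=0$, and since $g$ is also diagonalizable we get $g=1$. Thus $\Gamma$ embeds into a product $\Gamma_1\times\Gamma_2$ with each $\Gamma_i\subseteq\mathrm{GL}_{n_i}(\mathbb C)$ of exponent dividing $d$ and $n_1+n_2=n$, and the inductive hypothesis applies (and yields $|\Gamma|\mid|\Gamma_1|\,|\Gamma_2|\mid d^{n_1}d^{n_2}=d^n$). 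If instead $\Gamma$ acts irreducibly, Burnside's theorem gives $\operatorname{span}_{\mathbb C}\Gamma=M_n(\mathbb C)$; choose $g_1,\dots,g_{n^2}\in\Gamma$ forming a $\mathbb C$-basis of $M_n(\mathbb C)$. Since the pairing $(A,B)\mapsto\operatorname{Tr}(AB)$ is nondegenerate on $M_n(\mathbb C)$, the map $g\mapsto(\operatorname{Tr}(gg_1),\dots,\operatorname{Tr}(gg_{n^2}))$ is injective on $\Gamma$; as $gg_i\in\Gamma$, its values lie in $S^{n^2}$, so $|\Gamma|\le|S|^{n^2}<\infty$.

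Now $\Gamma$ is finite of exponent dividing $d$. To prove $|\Gamma|\mid d^n$ it suffices to show $v_p(|\Gamma|)\le n\,v_p(d)$ for every prime $p$; passing to a Sylow $p$-subgroup, this reduces to the assertion that a finite $p$-group $P\subseteq\mathrm{GL}_n(\mathbb C)$ of exponent dividing $p^e$ satisfies $|P|\mid p^{ne}$, which I would prove by induction on $n$. Since $P$ is finite, $\mathbb C^n$ is a semisimple $P$-module; if it is reducible it is a direct sum $W\oplus W'$ of two proper nonzero $P$-submodules, $P$ embeds into $\mathrm{GL}(W)\times\mathrm{GL}(W')$, and the inductive hypothesis gives $|P|\mid p^{(\dim W)e}\,p^{(\dim W')e}=p^{ne}$. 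So we may assume $\mathbb C^n$ is an irreducible (faithful) $P$-module. If $P$ is abelian this forces $n=1$ and $|P|\mid p^e$. If $P$ is nonabelian, choose a maximal abelian normal subgroup $A\trianglelefteq P$; since $P$ is a $p$-group, $C_P(A)=A\subsetneq P$. By Clifford's theorem $\mathbb C^n|_A=V_1\oplus\cdots\oplus V_t$ is the decomposition into $A$-isotypic components, permuted transitively by $P$; setting $H=\operatorname{Stab}_P(V_1)$ one has $t=[P:H]>1$ (if $t=1$ then $A$ acts by scalars, so $A\subseteq Z(P)$, forcing $A=Z(P)$ and $P=C_P(A)=A$ abelian, a contradiction), $V_1$ is an irreducible $H$-module of dimension $n/t$, and since the $A$-characters on the $V_i$ have trivial common kernel, $A$ embeds into $\mu_{p^e}^{\,t}$, so $|A|\mid p^{et}$. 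The inductive hypothesis then controls $H$ via its actions on $V_1$ (dimension $n/t$) and on the kernel of that action (which acts faithfully in dimension $n-n/t$).

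The heart of the matter, and where I expect the real difficulty, is to combine $|P|=[P:H]\cdot|H|$ with these inductive bounds on $|H|$ so as to land exactly on $|P|\mid p^{ne}$ without losing a factor of $[P:H]=t$. Carrying this out requires tracking the index $t$, the structure of $A$ as a $P$-permutation-invariant subgroup of $\mu_{p^e}^{\,t}$, and the kernel of the $H$-action on $V_1$ all at once; this precise numerical bookkeeping is exactly the content of \cite[Theorem 1]{HP76}. That no slack can be wasted is shown by the sharp cases where $|P|=p^{ne}$ is attained, for instance the central product $Q_8\circ(\mathbb Z/4)\subset\mathrm{GL}_2(\mathbb C)$ with $p=2$, $e=2$, $n=2$.
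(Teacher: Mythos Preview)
The paper does not supply a proof of this statement: it is quoted from \cite[Theorem~1]{HP76} and used as an input. So there is nothing in the paper to compare your argument against; the only question is whether your argument is complete on its own.

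Your finiteness half (the Burnside--Schur argument) is correct, and so is the reduction of the divisibility $|\Gamma|\mid d^n$ to the case of a $p$-group $P$ of exponent $p^e$ acting faithfully on $\mathbb{C}^n$, together with the reducible and abelian-irreducible subcases. But in the nonabelian irreducible case your induction only yields $|H/K|\mid p^{(n/t)e}$ and $|K|\mid p^{(n-n/t)e}$, hence $|H|\mid p^{ne}$ and then $|P|=t\,|H|\mid t\cdot p^{ne}$, which is off by the factor $t=[P:H]>1$. You then say that absorbing this factor ``is exactly the content of \cite[Theorem~1]{HP76}'': that is the very statement you are proving, so as written the argument is circular. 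The extra information you have collected --- $|A|\mid p^{et}$, the transitivity of $P$ on the $t$ characters, the self-centralizing property $C_P(A)=A$ --- is indeed relevant, but you have not shown how to combine it to cancel the factor $t$, and your own sharp example $Q_8\circ\mathbb{Z}/4$ (with $|P|=p^{ne}$ on the nose) confirms there is no slack available. To make the proof self-contained you must either carry out that combinatorial step explicitly (this is the substance of Herzog--Praeger's paper) or find a different route; as it stands, the proposal establishes finiteness but not the divisibility.
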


\begin{corollary}
\label{cor-pgl-effective-burnside}
Let $\Gamma \subset \mathrm{PGL}_n(\mathbb{C})$ be a subgroup. Suppose that for every $g \in \Gamma$ one has~$g^d = 1$. Then $\Gamma$ is finite and $|\Gamma|$ divides $(nd)^n$.
\end{corollary}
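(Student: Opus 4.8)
The plan is to reduce the statement to Theorem \ref{thm-effective-burnside} by lifting $\Gamma$ to a subgroup of $\mathrm{GL}_n(\mathbb{C})$ of bounded exponent. First I would fix the projection $p\colon \mathrm{GL}_n(\mathbb{C}) \to \mathrm{PGL}_n(\mathbb{C})$, whose kernel is the group $\mathbb{C}^\times I$ of scalar matrices. For each $g \in \Gamma$ choose an arbitrary preimage $\hat g \in \mathrm{GL}_n(\mathbb{C})$. Since $g^d = 1$ in $\mathrm{PGL}_n(\mathbb{C})$, the matrix $\hat g^d$ lies in $\ker p$, say $\hat g^d = c\, I$ with $c \in \mathbb{C}^\times$. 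As $\mathbb{C}$ is algebraically closed, pick $\mu \in \mathbb{C}^\times$ with $\mu^d = c^{-1}$ and set $\tilde g = \mu \hat g$, a lift of $g$ satisfying $\tilde g^d = I$. Let $H \subset \mathrm{GL}_n(\mathbb{C})$ be the subgroup generated by all of these lifts $\tilde g$, $g \in \Gamma$.

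The crux is to show that $H$ has exponent dividing $nd$. Each generator satisfies $(\det \tilde g)^d = \det(\tilde g^d) = 1$, so the homomorphism $\det$ carries $H$ into the group $\mu_d \subset \mathbb{C}^\times$ of $d$-th roots of unity; hence $(\det h)^d = 1$ for every $h \in H$. On the other hand $H \subseteq p^{-1}(\Gamma)$, so $p(h)^d = 1$ in $\mathrm{PGL}_n(\mathbb{C})$, which forces $h^d = \lambda\, I$ for some $\lambda \in \mathbb{C}^\times$. Taking determinants gives $\lambda^n = (\det h)^d = 1$, and therefore $h^{nd} = (h^d)^n = (\lambda I)^n = I$.

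Now Theorem \ref{thm-effective-burnside}, applied to $H \subset \mathrm{GL}_n(\mathbb{C})$ with the exponent bound $nd$, yields that $H$ is finite and $|H|$ divides $(nd)^n$. Finally, the restriction $p|_H\colon H \to \Gamma$ is surjective, since each $g \in \Gamma$ is the image of the generator $\tilde g$; thus $\Gamma \cong H/(H \cap \mathbb{C}^\times I)$ is finite, and by Lagrange's theorem $|\Gamma|$ divides $|H|$, which divides $(nd)^n$.

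I do not expect a serious obstacle here: this is a routine lifting argument, and the only delicate point is the determinant bookkeeping that produces the exponent $nd$ rather than $d$ — this extra factor $n$ accounts for scalar matrices of order dividing $n$ becoming trivial in $\mathrm{PGL}_n$. One could equally rescale the lifts to lie in $\mathrm{SL}_n(\mathbb{C})$ before applying the theorem, obtaining the same bound $(nd)^n$.
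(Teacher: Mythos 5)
Your proof is correct. The key difference from the paper's argument is the choice of lift: the paper passes to the full preimage $\widetilde{\Gamma}=p'^{-1}(\Gamma)$ under $p'\colon\mathrm{SL}_n(\mathbb{C})\to\mathrm{PGL}_n(\mathbb{C})$, where the kernel is the finite group $\mu_n\cong\mathbb{Z}/n\mathbb{Z}$ of scalar matrices of determinant one; this gives the exponent bound $nd$ on $\widetilde{\Gamma}$ immediately, since for any $\widetilde{g}$ the $d$-th power already lands in $\mu_n$. You instead work inside $\mathrm{GL}_n(\mathbb{C})$, normalize each lift so that $\tilde g^d=I$, generate a subgroup $H$, and then need the determinant bookkeeping ($\det(H)\subseteq\mu_d$, followed by $h^d=\lambda I$ with $\lambda^n=(\det h)^d=1$) to conclude $h^{nd}=I$ for \emph{all} of $H$, not just the generators. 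Both roads lead to Theorem~\ref{thm-effective-burnside} with exponent $nd$ and the bound $(nd)^n$; the paper's version is a bit leaner because the finite kernel of $\mathrm{SL}_n\to\mathrm{PGL}_n$ does all the work automatically, as you yourself note in your closing remark about rescaling into $\mathrm{SL}_n$. One small point worth being aware of: your $H$ is a proper subgroup of $p^{-1}(\Gamma)$ in general, so you correctly rely on surjectivity of $p|_H$ rather than taking the full preimage; the paper's $\widetilde{\Gamma}$ is the full preimage in $\mathrm{SL}_n$, which is also fine since $p'$ is surjective over $\mathbb{C}$.
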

\begin{proof}
Write the following exact sequence
\[
0\to \mathbb{Z}/n\mathbb{Z} \to \mathrm{SL}(V) \xrightarrow{p'}  \mathrm{PGL}(V) \to 1.
\]
Consider the group $\widetilde{\Gamma}=p'^{-1}(\Gamma)$. Pick any element $\widetilde{g}\in \widetilde{\Gamma}$ and put $g=p'(\widetilde{g})$. Note that  
$\ord(\widetilde{g})$ divides $n\ord(g)$ which divides $nd$ by assumption. By Theorem~\ref{thm-effective-burnside}, we have that $\widetilde{\Gamma}$ is finite and $|\widetilde{\Gamma}|$ divides $(nd)^n$. Hence $\Gamma$ is finite and $|\Gamma|$ divides $(nd)^n$ as well.
\end{proof}

Now we establish some boundedness properties of subgroups of the general linear group that we will use later.

\begin{lemma}
\label{lem-bounded-element-order}
Let $V$ be an $n$-dimensional vector space over $\mathbb{C}$. Let $g\in\mathrm{GL}(V)$ be an element of finite order $d$. Assume that there exists a constant $C$ such that for any eigenvalue $\alpha$ of $g$ the degree of the field extension $[\mathbb{Q}(\alpha):\mathbb{Q}]$ is bounded by $C$. Then~
\[
d\leq 2^{n}C^{2n}.
\] 
\end{lemma}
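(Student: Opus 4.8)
The plan is to bound $d$ by estimating the number of integers $m$ with $1 \le m \le d$ for which the primitive $m$-th root of unity $\zeta_m$ can occur as an eigenvalue of $g$, and then observing that all such $m$ together must ``use up'' enough of the divisors of $d$ to force $d$ itself to be small. Concretely, since $g$ has finite order $d$, it is diagonalizable and each eigenvalue $\alpha$ is a root of unity, say a primitive $m$-th root for some $m \mid d$; moreover, because $g$ has order exactly $d$, the least common multiple of all these $m$'s equals $d$. The hypothesis says $[\qq(\alpha):\qq] = \varphi(m) \le C$ for every eigenvalue, where $\varphi$ is Euler's totient function.

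First I would record the elementary number-theoretic fact that $\varphi(m) \le C$ forces $m \le 2C^2$: indeed $\varphi(m) \ge \sqrt{m}$ for $m \ge 1$ (or one can use the cruder $\varphi(m) \gg m/\log\log m$, but the clean bound $\varphi(m) \ge \sqrt{m/2}$ suffices), so $m \le 2C^2$. Hence every $m$ arising as the order of an eigenvalue satisfies $m \le 2C^2$. Since $g$ is an $n \times n$ matrix, it has at most $n$ distinct eigenvalues, hence at most $n$ distinct values of $m$, each bounded by $2C^2$. Therefore
\[
d = \lcm(m_1, \dots, m_k) \le \prod_{i=1}^{k} m_i \le (2C^2)^n = 2^n C^{2n},
\]
where $k \le n$ is the number of distinct eigenvalue orders, using that the lcm of a collection of positive integers is at most their product. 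This gives exactly the claimed bound $d \le 2^n C^{2n}$.

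The only slightly delicate point is the inequality relating $\varphi(m)$ to $m$: one needs a totient lower bound that is completely explicit and valid for all $m \ge 1$ (not just asymptotically), so that the constant in $m \le 2C^2$ is honest. I would either cite a standard reference for $\varphi(m) \ge \sqrt{m}$ for $m \ne 2, 6$ and handle the two small exceptions by hand, or simply use $\varphi(m) \ge \sqrt{m/2}$, which holds for all $m \ge 1$ and yields $m \le 2\varphi(m)^2 \le 2C^2$ directly. Everything else — diagonalizability of a finite-order element over $\cc$, the identification of eigenvalues with roots of unity, $\deg(\zeta_m) = \varphi(m)$, the lcm being the order, and $\lcm \le \prod$ — is routine and I would state it without elaboration.
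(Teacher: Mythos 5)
Your proof is correct and follows essentially the same route as the paper: express $d$ as the lcm of the orders $d_i$ of the eigenvalues, use $[\qq(\alpha_i):\qq]=\varphi(d_i)\leq C$ together with $d_i\leq 2\varphi(d_i)^2$ to get $d_i\leq 2C^2$, and conclude with $\lcm(d_i)\leq\prod d_i\leq(2C^2)^n$. The only cosmetic difference is that the paper indexes the $n$ eigenvalues with multiplicity rather than passing to distinct eigenvalue orders, which changes nothing in the estimate.
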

\begin{proof}
Let $\alpha_1, \ldots, \alpha_n$ be the eigenvalues of $g$. Then each $\alpha_i$ is a root of unity whose degree we denote by $d_i$, so that we have $\alpha_i^{d_i}=1$. Then $d=\lcm(d_i)_{1\leq i\leq n}$.
It is well known that 
\[
[\mathbb{Q}(\alpha_i):\mathbb{Q}]=\varphi(d_i)\] 
where $\varphi(d_i)$ is the Euler's totient function. One easily checks that $d_i\leq 2\varphi(d_i)^2$, so we estimate 
\begin{equation*}
\label{eq-estimate-1}
\sqrt{d_i/2} \leq \varphi(d_i) = [\mathbb{Q}(\alpha_i):\mathbb{Q}] \leq C.
\end{equation*} 
 
Consequently,  
\begin{equation*}
\label{eq-estimate-2}
d = \lcm(d_i)_{1\leq i\leq n}\leq d_1\cdot\ldots\cdot d_n \leq 
2^{n}C^{2n},
\end{equation*}
which completes the proof.
\end{proof}

\begin{lemma}
\label{lem-bfs-image-bfs}
Let $V$ be an $n$-dimensional vector space over $\mathbb{C}$, and let $\Gamma \subset \mathrm{GL}(V)$ be a subgroup. Consider an exact sequence of groups
\[
1 \to \mathbb{C}^\times \to \mathrm{GL}(V) \xrightarrow{p} \mathrm{PGL}(V)\to 1.
\]
Assume that there exists a constant $C$ such that for any eigenvalue $\alpha$ of $g$ the degree of the field extension $[\mathbb{Q}(\alpha):\mathbb{Q}]$ is bounded by $C$. 
Then both $\Gamma$ and $p(\Gamma)$ have bounded finite subgroups.  
\end{lemma}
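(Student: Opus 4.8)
The plan is to reduce the statement to Lemma \ref{lem-bounded-element-order} together with Jordan's theorem (Example \ref{thm-gl-bfs}). First I would take any finite subgroup $G\subset\Gamma$ and let $g\in G$ be an arbitrary element; since $G$ is finite, $g$ has finite order $d$. The hypothesis bounds $[\qq(\alpha):\qq]\leq C$ for every eigenvalue $\alpha$ of $g$, so Lemma \ref{lem-bounded-element-order} gives $d\leq 2^{n}C^{2n}$. Thus every element of every finite subgroup of $\Gamma$ has order bounded by $d_0:=2^{n}C^{2n}$, a constant depending only on $n$ and $C$.

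Next I would apply Jordan's theorem to $\Gamma\subset\mathrm{GL}(V)=\mathrm{GL}_n(\cc)$: there is a constant $J=J(n)$ such that any finite subgroup $G\subset\Gamma$ has a normal abelian subgroup $A\subset G$ with $[G:A]\leq J$. Since $A$ is a finite abelian group in which every element has order dividing $\mathrm{lcm}$ of element orders, and each element order is at most $d_0$, the exponent of $A$ is at most $\mathrm{lcm}(1,2,\dots,d_0)=:e_0$. A finite abelian group of rank $\leq n$ and exponent $\leq e_0$ has order at most $e_0^{n}$, so $|A|\leq e_0^{n}$ and hence $|G|\leq J\cdot e_0^{n}$. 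This bound is independent of $G$, so $\Gamma$ has bounded finite subgroups. Alternatively, one can invoke Theorem \ref{thm-effective-burnside} directly: a finite subgroup $G$ in which every element satisfies $g^{d_0!}=1$ (say) has order dividing $(d_0!)^{n}$, which again is the desired uniform bound.

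For $p(\Gamma)\subset\mathrm{PGL}(V)$ I would argue similarly. Given a finite subgroup $\overline G\subset p(\Gamma)$, every $\bar g\in\overline G$ has some finite order $\bar d$. Here the main subtlety is that an element of $p(\Gamma)$ lifting to $g\in\Gamma$ need not itself have finite order — we only know $g^{\bar d}=\lambda\cdot\mathrm{id}$ for some scalar $\lambda\in\cc^\times$. However, I can still control $\bar d$: if $\bar g$ lifts to $g\in\Gamma$ with $g^{\bar d}=\lambda\,\id$, then the eigenvalues of $g$ are of the form $\mu$ with $\mu^{\bar d}=\lambda$, so ratios of eigenvalues of $g$ are $\bar d$-th roots of unity, and $\bar g$ is conjugate (in $\mathrm{PGL}$) to a diagonal matrix whose entries are these ratios. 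Since $g\in\Gamma$, its eigenvalues satisfy the degree bound, hence so do their ratios up to a controlled factor, and a version of Lemma \ref{lem-bounded-element-order} applied to the element $\mathrm{diag}(\mu_i/\mu_1)\in\mathrm{GL}(V)$ bounds $\bar d$ by a constant $\bar d_0$ depending only on $n$ and $C$. Then Corollary \ref{cor-pgl-effective-burnside} (with $d=\bar d_0$) shows $|\overline G|$ divides $(n\bar d_0)^{n}$, a uniform bound, so $p(\Gamma)$ has bounded finite subgroups.

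The main obstacle I anticipate is the $\mathrm{PGL}$ case: making precise that the order of a finite-order element $\bar g\in\mathrm{PGL}(V)$ in the image is controlled by the arithmetic of eigenvalues of a lift $g\in\Gamma$, even though $g$ may have infinite order. The cleanest route is probably to observe that $\bar g$ has finite order $\bar d$ iff some lift $g$ satisfies $g^{\bar d}$ scalar, replace $g$ by $g/\mu_1$ (a scalar multiple, which does not change $\bar g$ but may leave $\Gamma$), so instead work directly: the eigenvalues of $g$, being roots of the characteristic polynomial of an element of $\mathrm{GL}_n$ over $\cc$, have degree $\leq C$ over $\qq$ by hypothesis, the pairwise ratios $\mu_i/\mu_j$ then have degree $\leq C^2$ over $\qq$, and each such ratio is a $\bar d$-th root of unity because $g^{\bar d}$ is scalar; applying the totient estimate from Lemma \ref{lem-bounded-element-order} to these ratios bounds $\bar d$. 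Once that is in place the rest is a direct application of the effective Burnside corollary.
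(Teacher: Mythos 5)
Your argument is correct and follows essentially the same approach as the paper: for $\Gamma$, bound the orders of elements via Lemma~\ref{lem-bounded-element-order} and then bound the subgroup order by an effective Burnside argument; for $p(\Gamma)$, lift $\bar g$ to some $\tilde g\in\Gamma$, observe $\tilde g^{\bar d}$ is scalar so the eigenvalue ratios are $\bar d$-th roots of unity of degree at most $C^2$ over $\qq$, bound $\bar d$ by applying Lemma~\ref{lem-bounded-element-order} to $\alpha_1^{-1}\tilde g$, and finish with Corollary~\ref{cor-pgl-effective-burnside}. In the $\mathrm{GL}$ half you are in fact a bit more careful than the published proof: Theorem~\ref{thm-effective-burnside} requires a single exponent $d$ with $g^{d}=1$ for all $g\in H$, whereas Lemma~\ref{lem-bounded-element-order} only bounds each individual order by $d_0=2^nC^{2n}$, which yields $g^{\lcm(1,\ldots,d_0)}=1$ rather than $g^{d_0}=1$; your use of $d_0!$ (or the detour through Jordan's theorem and the exponent bound on the abelian part) closes this small gap. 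This only changes the explicit numerical constant and not the qualitative statement, so the two arguments are substantively the same.
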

\begin{proof}
First we show that $\Gamma$ has bounded finite subgroups. Consider a finite subgroup $H\subset \Gamma$. 
By Lemma \ref{lem-bounded-element-order}, given any element $g\in H$, for its order $d=\ord (g)$ we have 
\[
d\leq 2^{n}C^{2n}.
\] 
Using Theorem~\ref{thm-effective-burnside}, we conclude that $|H|$ is bounded by $2^{n^2}C^{2n^2}$. Hence $\Gamma$ has bounded finite subgroups.

Now we prove that $p(\Gamma)$ has bounded finite subgroups. 
Consider a finite subgroup $G\subset p(\Gamma)$. 
By Corollary \ref{cor-pgl-effective-burnside}, it is enough to bound the order $d$ of an arbitrary element $g\in G$ by a constant which is independent of $g$. 
Pick any element $\widetilde{g}\in \mathrm{GL}(V)$ in $p^{-1}(g)\cap \Gamma$. Note that~$\widetilde{g}$ may have infinite order. Denote by $\alpha_1, \ldots, \alpha_n$ the eigenvalues of $\widetilde{g}$. Since $g^d = \mathrm{id}$, we have $\widetilde{g}^d=\lambda\cdot \mathrm{id}$ for some $\lambda\in \mathbb{C}^\times$. In particular, $\alpha_i^d=\lambda$ for any $1\leq i\leq n$. Thus, the element $\alpha_1^{-1}\widetilde{g}$ satisfies 
\[
\ord(\alpha_1^{-1}\widetilde{g})=d.
\] 
Note that its eigenvalues are $1$ and $\alpha_i/\alpha_1$ for $2\leq i\leq n$. Each $\alpha_i/\alpha_1$ belongs to the field $\mathbb{Q}(\alpha_1, \alpha_i)$, and the degree of the field extension $[\mathbb{Q}(\alpha_1, \alpha_i):\mathbb{Q}]$ is bounded by $C^2$. Applying Lemma \ref{lem-bounded-element-order} to the element $\alpha_0^{-1}\widetilde{g}$, we see that \[d\leq 2^{n}C^{4n}.\] Using Corollary \ref{cor-pgl-effective-burnside}, we conclude that $|G|$ is bounded by $2^{n^2}C^{4n^2}n^n$. Hence $p(\Gamma)$ has bounded finite subgroups.
\end{proof}

The following example due to C. Shramov shows that in general if $\Gamma\subset \mathrm{GL}(V)$ has bounded finite subgroups, it might be not true that $\Gamma' = p(\Gamma)\subset \mathrm{PGL}(V)$ enjoys the same property.

\begin{example}
\label{ex-quotient-bfs}
Let $V$ be a $2$-dimensional vector space over $\mathbb{C}$. By $U$ we denote the set of all complex roots of unity. Start with a group 
\[
\Gamma' = \left\{ 
\begin{pmatrix}
\xi & 0 \\
0 & 1  
\end{pmatrix}
\Big|\ \, \xi \in U\, \right\}\subset \mathrm{PGL}(V).
\]
Observe that $\Gamma' \simeq \mathbb{Q}/\mathbb{Z}$. Now, we define a subgroup $\Gamma\subset \mathrm{GL}(V)$ with $p(\Gamma)=\Gamma'$. Choose a countable subset in $\mathbb{C}$ consisting of elements $\{\zeta(\xi)\}_{\xi\in U}$, indexed by all roots of unity $\xi\in U$, that are algebraically independent over~$\mathbb{Q}$. Then we define
\[
\Gamma = \left\{\,
\zeta(\xi)\cdot 
\begin{pmatrix}
\xi & 0 \\
0 & 1  
\end{pmatrix}
\Big|\ \, \xi\in U\, \right\}\subset \mathrm{GL}(V).
\]
It is clear that $\Gamma$ is a free Abelian group, and hence $\Gamma$ has bounded finite subgroups, while $\Gamma'$ has finite subgroups of arbitrarily large order.
\end{example}

\section{Resolution of singularities}
In this section, we construct some special ``fiberwise'' resolution of singularities which we will use in the proof of Theorem \ref{prop-pluri-bfs}. Similar construction for Moishezon varieties was proposed by Ueno in the proof of \cite[Theorem 14.10]{Ue75}. However, since we work in the analytic category, instead of his argument involving scheme-theoretic generic point, we propose another purely analytic construction.

\begin{definition}
By an \emph{embedded smooth model} of a possibly reducible and non-reduced complex subspace~$Y'$ of a complex manifold $M'$ we mean a possibly reducible smooth complex subspace $Y$ of a complex manifold $M$ together with a commutative digram  
\[
\begin{tikzcd}
M \ar[r, "h"] & M' \\
Y \ar[r, "g"] \ar[u, hook] & Y' \ar[u, hook] 
\end{tikzcd}
\] 
where $h$ and $g$ are proper surjective holomorphic maps, the map $h$ is bimeromorphic, and $Y$ is the strict transform of $Y'$ via the map $g$.
\end{definition}

The last condition in the above definition means that for each irreducible component $F$ of~$Y$ there exists an irreducible component $F'$ of $Y'$ considered with the reduced structure such that the map $g|_{F}\colon F\to F'$ is bimeromorphic, and the map $g$ induces a bijection between the irreducible components of $Y$ and the irreducible components of $Y'$. In particular, each component of $Y$ is a strict transform of the corresponding irreducible component of $Y$ via the map $g$.

Note that an embedded smooth model always exists by the usual embedded resolution of singularities \cite[Theorem 2.0.2]{Wl08}.


In what follows, we call a map $\pi\colon Y\to W$ \emph{equidimensional over} $U\subset W$, if all the components of all its fibers over $U$ have the same dimension. We say that $\pi$ is \emph{equidimensional}, if it is equidimensional over $W$.

\begin{lemma}
\label{lem-open-subset}
Consider the following commutative diagram
\begin{equation}
\begin{tikzcd}
W \times M' \ar[rd, "\mathrm{pr}_1"] & \\
Y' \ar[u, hook] \ar[r, "\pi"] & W 
\end{tikzcd}
\end{equation}
where 
\begin{itemize}
\item
$Y'$ is a reduced, possibly reducible compact analytic subspace of $W\times M'$, 
\item
$M'$ is a smooth (possibly non-compact) complex manifold, 
\item
$W$ is an irreducible reduced (possibly non-compact) complex space, 
\item
$\pi$ is a proper surjective equidimensional map. 
\end{itemize}

Then there exists an open subset $U\subset W$ and a commutative diagram 
\begin{equation}
\begin{tikzcd}
\widetilde{M_U} \ar[r, "h_U"] & U \times M' \ar[rd, "\mathrm{pr}_1"] & \\
\widetilde{\pi^{-1}(U)} \ar[r, "g_U"] \ar[u, hook] & \pi^{-1}(U)\ar[u, hook] \ar[r, "\pi"] & U \subset W
\end{tikzcd}
\end{equation}
such that 
\begin{itemize}
\item
$\widetilde{\pi^{-1}(U)}\subset \widetilde{M_U}$ is an embedded smooth model of $\pi^{-1}(U)\subset U \times M'$,
\item
the map $\pi \circ g_{U}$ 
is smooth,
\item
for any point $\phi\in U$ we have that $Y_\phi\subset M_\phi$ is an embedded smooth model of $Y'_\phi\subset M'$: 
\begin{equation}
\label{diag-phi}
\begin{tikzcd}
M_\phi \ar[r, "h_\phi"] & M' \ar[rd] & \\
Y_\phi \ar[r, "g_\phi"] \ar[u, hook] & Y'_\phi\ar[u, hook] \ar[r, "\pi"] & \phi\in U
\end{tikzcd}
\end{equation}
where $Y_\phi = (\pi \circ g_{U})^{-1}(\phi)$, $Y'_\phi=\pi^{-1}(\phi)$ and $M_\phi=(\mathrm{pr}_1\circ h_U)^{-1}(\phi)$.
\end{itemize}
\end{lemma}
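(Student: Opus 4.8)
The plan is to construct the open set $U$ in two stages: first one that makes things generically smooth, then shrink so the fiberwise picture is well-behaved. First I would apply the usual embedded resolution of singularities \cite[Theorem 2.0.2]{Wl08} to the pair $\pi^{-1}(U_0)\subset W\times M'$ (for some initial open $U_0$, or just the whole thing) to obtain a global embedded smooth model $\widetilde{\pi^{-1}(U)}\subset \widetilde{M_U}$, together with the bimeromorphic morphism $h$ and the surjective proper $g$. This immediately gives the first bullet. The composition $\pi\circ g$ is a proper surjective holomorphic map from a smooth space to the irreducible reduced space $W$; by generic smoothness in the analytic category (the analytic analogue of generic flatness plus generic smoothness, valid because we are in characteristic zero and $\widetilde{\pi^{-1}(U)}$ is smooth), there is a dense Zariski-open $U_1\subset W_{\mathrm{reg}}$ over which $\pi\circ g$ is smooth. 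This handles the second bullet once we set $U\subset U_1$.

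The heart of the argument is the third bullet: over a suitable further-shrunk $U$, the fiber $Y_\phi\subset M_\phi$ should itself be an embedded smooth model of $Y'_\phi\subset M'$. Smoothness of $\pi\circ g_U$ over $U$ already tells us that $Y_\phi = (\pi\circ g_U)^{-1}(\phi)$ is smooth for each $\phi\in U$. Next I would analyze $h_U$ restricted to the fiber $M_\phi = (\mathrm{pr}_1\circ h_U)^{-1}(\phi)$: since $h_U$ is bimeromorphic and $\mathrm{pr}_1\circ h_U$ is (generically) a smooth morphism, over a dense open $U$ the map $h_\phi\colon M_\phi\to M'$ is again bimeromorphic — here one uses that the exceptional locus of $h_U$ is a proper analytic subset, so its image under $\mathrm{pr}_1\circ h_U$ is a countable union of analytic subsets; taking $U$ to avoid the ones that dominate $W$ ensures $h_\phi$ is an isomorphism away from a nowhere-dense set in $M_\phi$. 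One must also ensure $M_\phi$ is smooth (again from smoothness of $\mathrm{pr}_1\circ h_U$, which may require shrinking $U$ so that $\mathrm{pr}_1\circ h_U$ is equidimensional/smooth over $U$) and that $M_\phi$ is connected, or at least that irreducibility is not needed — the definition only asks for a commutative diagram with the stated properties, so we just need $h_\phi$ proper surjective bimeromorphic and $g_\phi$ proper surjective with $Y_\phi$ the strict transform of $Y'_\phi$.

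The last point — that $Y_\phi$ is the strict transform of $Y'_\phi$ via $g_\phi$ — is where I expect the real work to be, and it is the main obstacle. One has to match up irreducible components: $g_U$ induces a bijection on components of $\widetilde{\pi^{-1}(U)}$ and $\pi^{-1}(U)$, and restricting to a general fiber this should descend to a bijection on components of $Y_\phi$ and $Y'_\phi$ with each $g_\phi|_F\colon F\to F'$ bimeromorphic. The argument: for each component $\widetilde{F}$ of $\widetilde{\pi^{-1}(U)}$ with image-component $F'$ of $\pi^{-1}(U)$, the map $\widetilde{F}\to F'$ is bimeromorphic, hence an isomorphism over a dense open of $F'$; its exceptional locus maps into a proper analytic subset of $W$ under $\pi$, so over a suitable $U$ none of these degeneracies occur, and the restriction $\widetilde{F}_\phi\to F'_\phi$ stays bimeromorphic and dominant onto a component of $Y'_\phi$ for general $\phi$. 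Here I would invoke that $\pi$ is equidimensional so fibers do not jump in dimension, and use generic smoothness repeatedly so that the fibers $\widetilde{F}_\phi$, $F'_\phi$ are pure-dimensional of the expected dimension. Assembling: take $U$ to be the intersection of the finitely many dense opens produced above (finiteness because $\widetilde{\pi^{-1}(U)}$ and $\pi^{-1}(U)$ have finitely many components, being compact analytic spaces), and the three bullets hold simultaneously.
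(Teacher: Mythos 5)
Your overall strategy is aligned with the paper's: take an embedded resolution, shrink so the composite to $W$ is smooth, and then argue the fiber over a general $\phi$ is itself an embedded smooth model. You also correctly identify that the hard part is showing $Y_\phi$ is the strict transform of $Y'_\phi$. But your key step for that part contains a genuine error.

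You assert that ``the exceptional locus of $h_U$ \ldots{} maps into a proper analytic subset of $W$, so over a suitable $U$ none of these degeneracies occur.'' This is false whenever the singular locus of $Y'$ dominates $W$ --- which is the typical case and essentially the whole point of the lemma. For instance, if $Y'\subset W\times\mathbb{C}^2$ is the family of nodal cubics $\{y^2=x^3+wx^2\}$ over $W=\mathbb{C}$, the singular locus is $W\times\{(0,0)\}$, and the exceptional divisor of any resolution surjects onto $W$. So you cannot shrink $U$ to ``avoid'' the exceptional set. What you actually need, and what the paper proves, is the weaker but sufficient statement that the exceptional locus meets each fiber $Y_{i,\phi}$ over $\phi\in U$ in codimension at least one. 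This is not automatic from $\pi$ being equidimensional alone; the paper obtains it by tracking the blow-up sequence step by step, shrinking $U$ at each stage so that $\pi_i$ and $\pi_i|_{E_i}$ become flat over $U$ (using the Frisch--Douady-type result \cite[Theorem V.4.10]{BS76} that the non-flat locus is a proper closed analytic subset), hence equidimensional \cite[Proposition V.2.4]{BS76}, and then reading off the dimension inequality
\[
\dim(E_i\cap Y_{i,\phi}) \;\leq\; \dim Y_{i,\phi}-1.
\]
Once that codimension bound holds at every step, the strict-transform property descends to the fibers by induction on $i$. Your appeal to ``generic smoothness repeatedly'' and ``equidimensional so fibers do not jump'' gestures in this direction but does not actually produce this inequality, and your reliance on avoiding the exceptional set altogether would, if taken literally, reduce the lemma to the trivial case where $Y'\to W$ is already smooth.

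A smaller point: by working with a single global resolution $h_U$ instead of the explicit tower of blow-ups, you lose the inductive structure that makes the flatness/equidimensionality control manageable one center at a time. The conclusion you want can likely still be reached that way, but you would have to replace the step-by-step shrinking of $U$ by a single application of generic flatness to $g_U$ and to its exceptional divisor, and then argue directly that generic flatness forces the exceptional divisor to meet general fibers in codimension one and forces $g_\phi$ to be a bijection on components; that argument is not present in your sketch.
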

\begin{proof}
Passing to an open subset of $W$, if needed, we may assume that $W$ is smooth. By \cite[Theorem 2.0.2]{Wl08}, there exists a resolution of singularities of $Y_0=Y'$, 
which is realized as a sequence of blow-ups $g_1, \ldots, g_t$ in smooth centers $Z_i\subset M_i$ starting from $M_0=W\times M'$ as shown in the next diagram
\begin{equation}
\label{diag-global}
\begin{tikzcd}
\ & Z_{t-1} \ar[d, hook'] & \ldots & Z_1 \ar[d, hook'] & Z_0 \ar[d, hook'] & \\
M=M_t \ar[r, "h_t"] & M_{t-1} \ar[r, "h_{t-1}"] & \ldots \ar[r, "h_2"] & M_1 \ar[r, "h_1"] & M_0 = W\times M' \ar[rd] & \\
Y=Y_t \ar[r, "g_t"] \ar[u, hook] & Y_{t-1} \ar[r, "g_{t-1}"] \ar[u, hook] & \ldots \ar[r, "g_2"] & Y_1 \ar[r, "g_{1}"] \ar[u, hook] & Y_0 = Y' \ar[u, hook] \ar[r, "\pi"] & W 
\end{tikzcd}
\end{equation}
Here $h_i$ for $1\leq i\leq t$ is the blow-up of $M_i$ in a smooth center $Z_{i-1}\subset Y_{i-1}$, each $Y_i$ is the strict transform of $Y_0$, and $g_i=h_i|_{Y_i}$; here by the strict transform we mean the union of strict transforms of all the irreducible components of $Y_0$. 
Moreover, by \cite[Theorem 2.0.2]{Wl08} we may assume that each center $Z_i$ is disjoint from the set $\mathrm{Reg}(Y_i)$ of smooth points of $Y_i$.
Put 
\[
h=h_1\circ\ldots\circ h_t, \quad \quad \quad \quad g=g_1\circ\ldots\circ g_t, \quad \quad \quad \quad \pi_i = \pi\circ g_1\circ\ldots\circ g_i \quad \text{for}\ 0\leq i\leq t.
\] 

To start with, we define an open subset $U\subset W$ by the condition that the map $\pi_t = \pi \circ g$ is smooth over~$U$. 
Then $W\setminus U$ is a proper closed analytic subset in $W$, cf. \cite[Corollary 1.7]{Ue75}.

For an arbitrary point $\phi\in U$, we consider the diagram
\begin{equation}
\label{diag-phi-big}
\begin{tikzcd}
\ & Z_{t-1}\cap Y_{t-1,\phi} \ar[d, hook'] & \ldots & Z_1\cap Y_{1,\phi} \ar[d, hook'] & Z_0\cap Y_\phi \ar[d, hook'] \\
M_\phi=M_{t,\phi} \ar[r, "h_{t,\phi}"] & M_{t-1,\phi} \ar[r, "h_{t-1,\phi}"] & \ldots \ar[r, "h_{2,\phi}"] & M_{1,\phi} \ar[r, "h_{1,\phi}"] & M_{0,\phi} = M' \ar[rd] \\
Y_\phi = Y_{t,\phi} \ar[r, "g_{t,\phi}"] \ar[u, hook] & Y_{t-1,\phi} \ar[r, "g_{t-1,\phi}"] \ar[u, hook] & \ldots \ar[r, "g_{2,\phi}"] & Y_{1,\phi} \ar[r, "g_{1,\phi}"] \ar[u, hook] & Y_{0,\phi}=Y'_\phi \ar[r, "\pi"] \ar[u, hook] & \phi\in U 
\end{tikzcd}
\end{equation}
where $Y_{i, \phi}=\pi_i^{-1}(\phi)$ and $g_{i, \phi}=g_i|_{Y_{i,\phi}}$ for $1\leq i\leq t$. Put $g_\phi=g|_{Y_\phi}$.

We claim that, possibly after shrinking $U$ at each of $t$ steps in diagram \eqref{diag-global}, each map~$\pi_i$ is equidimensional over $U$. Indeed, $\pi_0=\pi$ is equidimensional by assumption. By induction, for a fixed $i$ with $1\leq i\leq t$ we may assume that $\pi_j$ is equidimensional for all~$j\leq i-1$. We show that, possibly after shrinking $U$, the map $\pi_{i}$ is equidimensional. Consider the exceptional set $E_i=\mathrm{Exc}(g_i)$ which maps to $Z_{i-1}$. By \cite[Theorem V.4.10]{BS76}, the subset of points in~$U$ over which the maps $\pi_i$ and $\pi_i|_{E_i}$ are not flat, is a proper closed analytic subset of $U$. Hence, possibly after shrinking~$U$, we see that these maps are flat over $U$. By \cite[Proposition V.2.4]{BS76} the maps $\pi_i$ and $\pi_i|_{E_i}$ are equidimensional over $U$. In particular, $Z_i$ either maps to $U$ surjectively, or $\pi_i(Z_i)\cap U=\emptyset$. (In the latter case, $h_i$ induces an isomorphism of $\pi_{i}^{-1}(U)$ with $\pi_{i-1}^{-1}(U)$, hence we may skip this step in our construction).

Consequently, for any $i$ with $1\leq i\leq t$ and any $\phi\in U$, we can write 
\[
\dim Y' - 1 \geq \dim E_i = \dim U + \dim (E_i\cap Y_{i, \phi}).
\] 
In particular, if $E_i$ is a divisor on $Y_i$, then the above inequality becomes an equality. 
On the other hand,  
\[
\dim Y' = \dim U + \dim Y'_{\phi} = \dim U + \dim Y_{i, \phi}.
\] 
We conclude that
\begin{equation}
\label{eq-dim}
\dim (E_i\cap Y_{i, \phi}) \leq \dim Y_{i, \phi} - 1.
\end{equation}
Hence, for any $1\leq i\leq t$ the intersection $E_i\cap Y_{i, \phi}$ is a proper closed subset of $Y_{i,\phi}$ which has codimension at least $1$ in any irreducible component of $Y_{i, \phi}$. 

As a consequence, for each $1\leq i\leq t$ we have that $Y_{i, \phi}$ is the strict transform of $Y_{i-1, \phi}$ via the map $g_{i, \phi}$. Hence $Y_{\phi}$ is the strict transform of $Y'_{\phi}$ via the map $g_{\phi}$. 
Observe that~$Y_\phi$ is smooth (possibly, reducible), since by construction the map $\pi_t|_U$ is smooth. Hence~$Y_\phi\subset M_\phi$ is an embedded smooth model of $Y'_\phi\subset M'$ for any $\phi\in U$.
Put 
\[
\widetilde{M_U}=(h\circ \mathrm{pr_1})^{-1}(U), \quad \quad \quad \widetilde{\pi^{-1}(U)} = \pi_t^{-1}(U), \quad \quad \quad h_U = h|_{\widetilde{M_U}}, \quad \quad \quad g_{U}=g|_{\pi_t^{-1}(U)}.
\] 
This completes the proof.
\end{proof}

\begin{lemma}
\label{lem-bounded-betti}
In the assumptions and notation of Lemma \ref{lem-open-subset}, if the complex space $W$ is compact then for any point $\phi\in W$ we can find an embedded smooth model $Y_\phi\subset M_\phi$ of $Y'_\phi=\pi^{-1}(\phi)\subset M'_\phi$ as in the next diagram
\begin{equation}
\label{diag-phi}
\begin{tikzcd}
M_\phi \ar[r, "h_\phi"] & M' \ar[rd] & \\
Y_\phi \ar[r, "g_\phi"] \ar[u, hook] & Y'_\phi\ar[u, hook] \ar[r, "\pi"] & \phi\in U
\end{tikzcd}
\end{equation}
such that the Betti numbers of $Y_\phi$ are bounded uniformly in $\phi\in W$.

\end{lemma}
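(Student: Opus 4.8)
The plan is to argue by induction on $\dim W$, exhibiting a finite stratification of $W$ into connected locally closed pieces over each of which a suitable fiberwise resolution has fibers with constant Betti numbers. If $\dim W = 0$, then $W$ is a single reduced point, $Y' = Y'_\phi$, and any embedded smooth model $Y_\phi \subset M_\phi$ of $Y'_\phi$ provided by embedded resolution of singularities \cite[Theorem 2.0.2]{Wl08} works, there being nothing to make uniform.

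For the inductive step I would first invoke Lemma \ref{lem-open-subset}. It yields a Zariski-open subset $U \subset W$ --- dense since $W$ is irreducible, and, as in the proof of that lemma, a connected complex manifold --- together with, for every $\phi \in U$, an embedded smooth model $Y_\phi \subset M_\phi$ of $Y'_\phi$, all fitting into the family $\pi \circ g_U \colon \widetilde{\pi^{-1}(U)} = \pi_t^{-1}(U) \to U$, which is smooth by the choice of $U$. This family is moreover proper: it is the restriction of $\pi_t = \pi \circ g \colon Y \to W$ to the preimage of $U$, and $Y$ is compact, being proper over the compact $Y'$ through the sequence of blow-ups defining it. By Ehresmann's fibration theorem a smooth proper family over a connected base is a locally trivial $C^\infty$ fiber bundle, so all fibers $Y_\phi$ with $\phi \in U$ are diffeomorphic; in particular their Betti numbers are bounded by a single constant $B_0$.

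It remains to handle $Z := W \setminus U$, a proper closed analytic subset of the compact $W$, hence compact with finitely many irreducible components $Z_1, \dots, Z_r$, each a compact irreducible reduced complex space of dimension $< \dim W$. For each $j$ I would set $Y'_{(j)} := (\pi^{-1}(Z_j))_{\mathrm{red}} \subset Z_j \times M'$ and $\pi_{(j)} := \mathrm{pr}_1|_{Y'_{(j)}}$; then $Y'_{(j)}$ is reduced and compact (closed in $Y'$), and $\pi_{(j)} \colon Y'_{(j)} \to Z_j$ is proper and surjective. It is equidimensional, because for $\phi \in Z_j$ the fiber $\pi_{(j)}^{-1}(\phi)$ has the same underlying set as $\pi^{-1}(\phi) = Y'_\phi$, all of whose components have one fixed dimension by equidimensionality of $\pi$ over $W$. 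Hence $(Y'_{(j)}, Z_j \times M', \pi_{(j)})$ satisfies the hypotheses of Lemma \ref{lem-bounded-betti} with base of strictly smaller dimension and the same ambient manifold $M'$, and the inductive hypothesis produces, for every $\phi \in Z_j$, an embedded smooth model $Y_\phi \subset M_\phi$ of $\pi_{(j)}^{-1}(\phi)$ whose Betti numbers are bounded by a constant $B_j$ independent of $\phi$. Since an embedded smooth model depends only on the reduced structure of the modelled subspace and $(\pi_{(j)}^{-1}(\phi))_{\mathrm{red}} = (\pi^{-1}(\phi))_{\mathrm{red}} = (Y'_\phi)_{\mathrm{red}}$, this is also an embedded smooth model of $Y'_\phi$. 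Setting $B := \max(B_0, B_1, \dots, B_r)$ then works for all $\phi \in W$, since every such $\phi$ lies in $U$ or in some $Z_j$.

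The argument is a routine induction, and I expect the only delicate points to be the compatibility of the induction with possibly non-reduced fibers: namely, that restricting to a component $Z_j$ of the bad locus and passing to the reduced structure preserves all hypotheses of Lemma \ref{lem-open-subset} (reducedness imposed by hand; properness and surjectivity clear; equidimensionality stable since it only constrains the dimensions of the components of the set-theoretic fibers), and that an embedded smooth model of $\pi^{-1}(\phi)$ is insensitive to its non-reduced structure, so that the models built over $Z_j$ are genuine models of $Y'_\phi$. Invoking Ehresmann over the connected stratum $U$ (to get constancy, not mere local constancy, of the Betti numbers) and the finiteness of the stratification guaranteed by compactness are the remaining ingredients.
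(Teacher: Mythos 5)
Your proof is correct and follows essentially the same strategy as the paper: apply Lemma \ref{lem-open-subset} to get a dense open $U$ where the family is smooth, use Ehresmann there, and iterate on the irreducible components of the complement, relying on compactness for finiteness of the stratification. The one point where you are more careful than the paper is in explicitly passing to the reduced structure on $\pi^{-1}(Z_j)$ and verifying that the hypotheses of Lemma \ref{lem-open-subset} (reducedness, equidimensionality) persist, and observing that an embedded smooth model only sees the reduced structure — a worthwhile clarification, but not a change of method.
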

\begin{proof}
By Lemma \ref{lem-open-subset}, there exists an open subset $U\subset W$ and a resolution of singularities of 
\[
g_{U}\colon \widetilde{\pi^{-1}(U)}\to \pi^{-1}(U)
\] 
such that it gives an embedded smooth model $g_\phi\colon Y_\phi\to Y'_\phi$ for any $\phi\in U$, where $Y_\phi$ is embedded into $M_\phi$ as in diagram \ref{diag-phi} (see also diagram \ref{diag-phi-big}). Put $U_0=U$. Consider the following decomposition of the complement of $U_0$ in $W$ into the union of irreducible components: 
\[
W\setminus U_0=\bigcup_{j=1}^{k_1} W_{j}, \quad \quad \quad \quad \quad Y'\setminus \pi^{-1}(U_0) = \bigcup_{j=1}^{k_1} Y'_{j}, \quad \quad \quad \quad \quad \pi_{j} = \pi|_{Y'_{j}}\colon Y'_{j}\to W_{j}.
\]
Applying Lemma \ref{lem-open-subset} to each $Y'_{j}$ and  $\pi_{j}\colon Y'_{j}\to W_{j}$ and iterating this process, we obtain a decomposition of $W$ into finitely many locally closed analytic subsets $U_j$
\[
W = \bigcup_{j=0}^k U_j, \quad \quad \quad \quad \quad \quad \quad \quad \quad Y' = \bigcup_{j=0}^k Y'_{j}, \quad \quad \quad \quad \quad \quad \quad \quad \quad Y'_{j} = \pi^{-1}(U_{j})
\]
and resolutions of singularities $g_{j}\colon Y_{j}\to Y'_{j}$
such that each fiber $Y_\phi$ of the induced map 
\[
\pi\circ g_j\colon Y_{j}\to U_{j}
\] 
over $\phi\in U_{j}$ is a smooth model of $Y'_\phi$, and $Y_\phi$ is embedded into $M_\phi$ as in diagram \ref{diag-phi}. By construction, each map 
$
\pi\circ g_j
$
is smooth, so by Ehresmann's theorem it is differentially locally trivial. Hence, we see that for each $n$ the Betti numbers 
$
b_n(Y_\phi)
$ 
are constant for any $\phi\in U_j$. Since there are only finitely many $U_j$, we conclude that the Betti numbers of $Y_\phi$
are uniformly bounded for each $\phi\in W$. This completes the proof. 
\end{proof}

\section{Proof of main results}
We use the notation of section \ref{subsec-pluricanonical-rep}. 
First we prove Theorem \ref{prop-pluri-bfs} in the simple case when the linear system $|K_X|$ defines the pluricanonical map. 

Recall that for a compact smooth (possibly, reducible) equidimensional complex space  $X$ one can define the canonical class $K_X$ and the vector spaces of the holomorphic forms $\mathrm{H}^0(X, \oo_X(K_X))$ in the usual way. Also, the group of bimeromorphic self-maps $\mathrm{Bim}(X)$ is also well-defined in this situation.  

\begin{proposition}
\label{main-thm-m-1}
Let $X$ be a compact smooth (possibly, reducible) equidimensional complex space. Let $g\in \mathrm{Bim}(X)$, and let $\beta$ be an eigenvalue of $g$ considered as an operator acting on 
\[
\mathrm{H}^0(X, \oo_{X}(K_X)).
\]
Then the degree of the field extension $[\mathbb{Q}(\beta):\mathbb{Q}]$ is bounded by the Betti number 
\[
b_n(X)=\mathrm{rk} (\mathrm{H}^n(X, \mathbb{Z})/\mathrm{tors}).
\]
where $n$ is the dimension of $X$. 
\end{proposition}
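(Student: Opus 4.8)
The plan is to realize the action of $g$ on $\mathrm{H}^0(X,\oo_X(K_X))$ inside the action of $\mathrm{Bim}(X)$ — or rather of the cyclic group generated by $g$ — on the singular cohomology $\mathrm{H}^n(X,\cc)$, which is defined over $\qq$. The key classical input is that for a compact complex manifold (or a smooth compact equidimensional complex space) $X$ of dimension $n$, a holomorphic $n$-form is a closed form and defines a class in $\mathrm{H}^n(X,\cc)$; concretely $\mathrm{H}^0(X,\oo_X(K_X)) = \mathrm{H}^0(X,\Omega^n_X)$ injects into $\mathrm{H}^n(X,\cc)$ via $\omega\mapsto[\omega]$, because a holomorphic $n$-form that is $d$-exact is zero (integrate against $\overline\omega$, or use the Hodge-type argument that $F^n\mathrm{H}^n \hookrightarrow \mathrm{H}^n$). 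I would first recall that a bimeromorphic self-map $g$ of $X$ acts on $\mathrm{H}^0(X,\oo_X(K_X))$ by pullback of forms (the indeterminacy locus has codimension $\geq 2$ in the smooth locus, so the pullback form extends, exactly as in the definition of $\rho$ in §\ref{subsec-pluricanonical-rep}), and that this action is compatible with the pullback action of $g$ on $\mathrm{H}^n(X,\zz)/\mathrm{tors}$ — here one needs that $g^\ast$ on cohomology is well-defined for bimeromorphic maps, which follows by resolving the graph $\overline{\Gamma_g}$ and using that pushforward and pullback along modifications behave well on $\mathrm{H}^n$ modulo torsion (cf. \cite[§14]{Ue75}). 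Thus the diagram
\[
\begin{tikzcd}
\mathrm{H}^0(X,\oo_X(K_X)) \ar[r, hook] \ar[d, "g^\ast"] & \mathrm{H}^n(X,\cc) \ar[d, "g^\ast"] \\
\mathrm{H}^0(X,\oo_X(K_X)) \ar[r, hook] & \mathrm{H}^n(X,\cc)
\end{tikzcd}
\]
commutes, and the inclusion is $g^\ast$-equivariant.

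Next I would extract the arithmetic statement. The operator $g^\ast$ on $\mathrm{H}^n(X,\cc)$ is the complexification of an integral operator on the lattice $\mathrm{H}^n(X,\zz)/\mathrm{tors}$ of rank $b_n(X)$, hence its characteristic polynomial $P(t)\in\zz[t]$ is monic of degree $b_n(X)$. Since $\mathrm{H}^0(X,\oo_X(K_X))$ is a $g^\ast$-invariant subspace of $\mathrm{H}^n(X,\cc)$, the characteristic polynomial of $g^\ast$ restricted to this subspace divides $P(t)$ in $\cc[t]$, hence (by Gauss's lemma, both being monic with the quotient monic) in $\qq[t]$, hence is itself a monic integral polynomial of degree $\leq b_n(X)$. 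Therefore every eigenvalue $\beta$ of $g$ on $\mathrm{H}^0(X,\oo_X(K_X))$ is an algebraic integer and a root of this polynomial, so its minimal polynomial over $\qq$ has degree $\leq b_n(X)$, i.e. $[\qq(\beta):\qq]\leq b_n(X)$. Since $b_n(X)=\mathrm{rk}(\mathrm{H}^n(X,\zz)/\mathrm{tors})$, this is exactly the claimed bound.

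The main obstacle is the first paragraph: making precise and correct the claim that a bimeromorphic self-map of a (possibly reducible) smooth compact equidimensional complex space $X$ acts coherently on both $\mathrm{H}^0(X,\oo_X(K_X))$ and $\mathrm{H}^n(X,\zz)/\mathrm{tors}$, and that these actions agree under the inclusion of holomorphic $n$-forms into de Rham cohomology. For the reducible case one should note that $\mathrm{H}^n$ of a disjoint-component-like situation, or of a nodal union, still carries the relevant Hodge-theoretic filtration piece in top degree, or simply reduce to the irreducible components since $\mathrm{Bim}(X)$ permutes them and a holomorphic $n$-form is determined by its restriction to each top-dimensional component; I would handle this by first treating $X$ irreducible and smooth and then remarking that the general case follows componentwise, the permutation of components only enlarging the ambient lattice. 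For the bimeromorphic invariance of the cohomology action I would cite the graph-closure construction together with \cite[§14]{Ue75}, where precisely this compatibility (for the Moishezon case, but the argument is local on the resolution and does not use Moishezon-ness) is carried out; the only point to check is that resolving $\overline{\Gamma_g}$ to a smooth model and using $h_\ast h^\ast = \mathrm{id}$ on $\mathrm{H}^n/\mathrm{tors}$ for a modification $h$ gives a well-defined $g^\ast$ independent of the resolution, which is standard.
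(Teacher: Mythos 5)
Your proposal follows essentially the same route as the paper: embed $\mathrm{H}^0(X,\oo_X(K_X))$ into $\mathrm{H}^n(X,\cc)$ via the de Rham theorem, use that $g^\ast$ acts on the lattice $\mathrm{H}^n(X,\zz)/\mathrm{tors}$ (citing \cite[\S 14]{Ue75}), and conclude that the eigenvalues of the restricted action are algebraic integers of degree at most $b_n(X)$. The paper's proof is terser and leaves the arithmetic step implicit; you correctly fill it in, though one intermediate claim is misstated: the characteristic polynomial of $g^\ast|_{\mathrm{H}^0(X,\oo_X(K_X))}$ divides the integral $P(t)$ in $\cc[t]$, but there is no reason for it to lie in $\qq[t]$ (its roots are a proper subset of the roots of $P$, generally not Galois-stable), so the appeal to Gauss's lemma does not apply. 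This does not affect the conclusion — all you actually need is that $\beta$ is an eigenvalue of $g^\ast$ on the full space $\mathrm{H}^n(X,\cc)$, hence a root of the monic integer polynomial $P(t)$ of degree $b_n(X)$, whence $\beta$ is an algebraic integer with $[\qq(\beta):\qq]\leq b_n(X)$; the invariance of the subspace guarantees that eigenvalues of the restriction are among the eigenvalues of $P$ but the restricted characteristic polynomial itself plays no further role.
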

\begin{proof}
Consider the map 
\begin{equation}
\label{inclusion}
\mathrm{H}^0(X, \oo_X(K_X))\xhookrightarrow{} \mathrm{H}^n(X, \mathbb{C}) = (\mathrm{H}^n(X, \mathbb{Z})/\mathrm{tors})\otimes \mathbb{C}
\end{equation}
which is injective by de Rham theorem. It is well-known that any element $g\in\mathrm{Bim}(X)$ acts as a linear operator on $\mathrm{H}^n(X, \mathbb{Z})/\text{tors}$, see e.g. {\cite[14.3]{Ue75}}. 
Since the natural action of $g\in\mathrm{Bim}(X)$ on the space of holomorphic forms is compatible with its action on $\mathrm{H}^n(X, \mathbb{Z})/\mathrm{tors}$, we conclude that~$\beta$ is an algebraic integer, and $[\mathbb{Q}(\beta):\mathbb{Q}]$ is bounded by the Betti number $b_n(X)$.
\end{proof}

\begin{corollary}
Let $X$ be a compact complex manifold of dimension $n$. If the linear system~$|K_X|$ defines the pluricanonical map, then both $\rho(\mathrm{Bim}(X))$ and $\overline{\rho}(\mathrm{Bim}(X))$ have bounded finite subgroups.
\end{corollary}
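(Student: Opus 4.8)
The plan is to combine Proposition \ref{main-thm-m-1} with Lemma \ref{lem-bfs-image-bfs}. The corollary concerns the case where the pluricanonical map is already given by the complete linear system $|K_X|$, i.e. where we may take $m=1$ in the construction of section \ref{subsec-pluricanonical-rep}, so that $V = \mathrm{H}^0(X, \oo_X(K_X))$ and $\rho\colon \mathrm{Bim}(X)\to \mathrm{GL}(V)$ is the ordinary canonical representation. Set $n = \dim X$ and $N = \dim V$, and let $\Gamma = \rho(\mathrm{Bim}(X)) \subset \mathrm{GL}(V)$ be the image.

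The first step is to observe that Proposition \ref{main-thm-m-1} applies verbatim to $X$ itself: for any $g\in\mathrm{Bim}(X)$ and any eigenvalue $\beta$ of $\rho(g)$ acting on $\mathrm{H}^0(X, \oo_X(K_X))$, the degree $[\mathbb{Q}(\beta):\mathbb{Q}]$ is bounded by the Betti number $b_n(X) = \operatorname{rk}(\mathrm{H}^n(X,\zz)/\mathrm{tors})$. Crucially this bound is a constant depending only on $X$ (not on $g$), so we may take $C = b_n(X)$. Note also that an eigenvalue of an element $\rho(g)\in\Gamma$ is by definition an eigenvalue of $g$ acting on $\mathrm{H}^0(X,\oo_X(K_X))$, so the hypothesis of Lemma \ref{lem-bfs-image-bfs} — that every eigenvalue $\alpha$ of every element of $\Gamma$ satisfies $[\mathbb{Q}(\alpha):\mathbb{Q}]\le C$ — is exactly what Proposition \ref{main-thm-m-1} provides.

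The second step is then simply to invoke Lemma \ref{lem-bfs-image-bfs} with $V$, $\Gamma = \rho(\mathrm{Bim}(X))$ and the constant $C = b_n(X)$: the lemma concludes that both $\Gamma$ and $p(\Gamma)$ have bounded finite subgroups, where $p\colon\mathrm{GL}(V)\to\mathrm{PGL}(V)$ is the projection. Since $\overline{\rho} = p\circ\rho$, the image $\overline{\rho}(\mathrm{Bim}(X)) = p(\Gamma)$, so both $\rho(\mathrm{Bim}(X))$ and $\overline{\rho}(\mathrm{Bim}(X))$ have bounded finite subgroups, as claimed. (Explicitly, the proof of Lemma \ref{lem-bfs-image-bfs} gives the bounds $2^{N^2}C^{2N^2}$ and $2^{N^2}C^{4N^2}N^N$ respectively, with $C = b_n(X)$.)

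There is essentially no obstacle here: the content is entirely carried by Proposition \ref{main-thm-m-1} (the arithmetic control of eigenvalues via the integral cohomology lattice) and by the group-theoretic Lemma \ref{lem-bfs-image-bfs}, and the corollary is the formal consequence of feeding one into the other. The only point requiring a word of care is that "an eigenvalue of $g$ acting on $\mathrm{H}^0(X,\oo_X(K_X))$" and "an eigenvalue of the corresponding element of $\Gamma\subset\mathrm{GL}(V)$" refer to the same thing, which is immediate from the definition $V = \mathrm{H}^0(X,\oo_X(K_X))$. The general case of Theorem \ref{prop-pluri-bfs}, where one must first pass to a fiberwise resolution to reduce the pluricanonical representation for large $m$ to a canonical representation of a family of smooth fibers with uniformly bounded Betti numbers, is what the machinery of section 3 is for; the present corollary is the base case where that reduction is unnecessary.
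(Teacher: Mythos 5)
Your proposal is correct and follows exactly the paper's own argument: apply Proposition~\ref{main-thm-m-1} to bound the degree $[\mathbb{Q}(\beta):\mathbb{Q}]$ of every eigenvalue of every $\rho(g)$ by $C=b_n(X)$, then feed this constant into Lemma~\ref{lem-bfs-image-bfs} to conclude that both $\rho(\mathrm{Bim}(X))$ and $\overline{\rho}(\mathrm{Bim}(X))$ have bounded finite subgroups. The extra remarks about explicit bounds and about the role of this corollary as the $m=1$ base case are accurate and consistent with the paper.
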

\begin{proof}
Let $g\in \mathrm{Bim}(X)$, and let $\beta$ be an eigenvalue of $g$ considered as a linear operator acting on~
$\mathrm{H}^0(X, \oo_{Y}(K_X))$. 
By Proposition \ref{main-thm-m-1}, the degree of the field extension $[\mathbb{Q}(\beta):\mathbb{Q}]$ is bounded by the Betti number
$
b_n(X)
$. Thus from Lemma~\ref{lem-bfs-image-bfs} it follows that both $\rho(\mathrm{Bim}(X))$ and $\overline{\rho}(\mathrm{Bim}(X))$ have bounded finite subgroups.
\end{proof}

Now we are going to prove Theorem \ref{prop-pluri-bfs} in the general case. 

\begin{construction}
\label{constr-cover}
We follow \cite[14.4]{Ue75}. Let $X$ be a complex manifold, and let $M'$ be the total space of the line bundle~$\oo_X(K_X)$ considered as a non-compact complex manifold. Let~$\tau \colon M'\to X$ be the natural projection. Then the \emph{$m$-canonical cover} of $X$ associated with a holomorphic $m$-form 
\[
\phi\in\mathrm{H}^0(X, \oo_X(mK_X))
\] 
is a compact complex subspace $Y'_\phi$ of $M'$ constructed as follows.


Let $\{ U_i \}_{i\in I}$ be an open covering of $X$ with local coordinates $z_i^1, \ldots, z_i^n$ on $U_i$. Locally on~$U_i$ we can write 
\[
\phi = \phi_i(z_i^1, \ldots, z_i^n) (dz_i^1\wedge \ldots \wedge dz_i^n)^{\otimes m},
\] 
where $\phi_i\in \oo_X(U_i)$. 
Note that $M'$ admits an open covering $\{ V_i \}_{i\in I}$ where $V_i=U_i\times \mathbb{C}$ with local coordinates $z_i^1, \ldots, z_i^n, w_i$. 
For each $i\in I$, define a closed analytic subset $Y'_\phi\subset M'$ locally on $V_i$ by the equations 
\begin{equation}
\label{eq-local-definition}
w_i^m = \phi_i (z_i^1, \ldots, z_i^n).
\end{equation}
\end{construction}

\begin{remark}
Note that the complex space $Y'_\phi$ constructed above may be not connected, reducible and even non-reduced. However, by construction it is equidimensional.
\end{remark}

For the reader's convenience, we reproduce the proof of the following lemma. 

\begin{lemma}[{\cite[14.4]{Ue75}}]
\label{lem-local-bounding}
Let $X$ be a compact complex manifold of dimension $n$, and $m$ be a number such that the linear system $|mK_X|$ defines the pluricanonical map. 
Let 
\[
\phi\in \mathrm{H}^0(X, \oo_X(mK_X))
\] 
be an eigenvector of an element $g\in \mathrm{Bim}(X)$ acting on $\mathrm{H}^0(X, \oo_X(mK_X))$ with an eigenvalue $\alpha$. Then the degree of the field extension $[\mathbb{Q}(\alpha):\mathbb{Q}]$ is bounded by $C=C(X, \phi)$ depending only on $X$ and $\phi$, but not on $g$.

More precisely, we can take $C=b_n(Y_\phi, \mathbb{Z})$ where $Y_\phi\subset M_\phi$ is any embedded smooth model of the $m$-canonical cover $Y'_\phi\subset M'$.

\end{lemma}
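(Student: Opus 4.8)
The plan is to relate the eigenvalue $\alpha$ of $g$ on $\mathrm{H}^0(X,\oo_X(mK_X))$ to an eigenvalue $\beta$ of an induced bimeromorphic self-map of the $m$-canonical cover acting on holomorphic $n$-forms, and then quote Proposition \ref{main-thm-m-1}. First I would make sense of the $m$-canonical cover $Y'_\phi\subset M'$ from Construction \ref{constr-cover} as a genuine compact complex subspace: the local equations $w_i^m=\phi_i(z_i^1,\dots,z_i^n)$ glue because $\phi$ is a global section of $\oo_X(mK_X)$ (the transition functions for $w_i$ are the cocycle for $\oo_X(K_X)$, so $w_i^m$ transforms like the cocycle for $\oo_X(mK_X)$, matching $\phi_i$). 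The tautological observation is that on $Y'_\phi$ the function $w$ defines, away from the branch locus, an $n$-form: locally $w_i\,dz_i^1\wedge\cdots\wedge dz_i^n$ is a well-defined holomorphic $n$-form on the smooth locus of $Y'_\phi$, because an $m$-th root of $\phi_i$ has been adjoined. More precisely, the pullback $\tau^*$ of the tautological section $w$ of $\tau^*\oo_X(K_X)$ restricts on $Y'_\phi$ to a section $\eta$ of $\oo_{Y'_\phi}(K_{Y'_\phi})$ (after passing to an embedded smooth model, using that $\tau\colon Y'_\phi\to X$ is generically finite of degree $m$ onto its image, hence $Y'_\phi$ has the expected canonical class).

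Next I would promote $g$ to a bimeromorphic self-map $\widetilde{g}$ of $Y_\phi$, where $Y_\phi\subset M_\phi$ is an embedded smooth model of $Y'_\phi$ (which exists by the discussion after the definition of embedded smooth model, resolving in the analytic category via \cite[Theorem 2.0.2]{Wl08}). The point is that $g\in\mathrm{Bim}(X)$ acts on the total space $M'$ of $\oo_X(K_X)$ via its induced linear action on the fibers (the action on $K_X$-forms), and since $g^*\phi=\alpha\phi$, a suitable scaling of this action by an $m$-th root $\alpha^{1/m}$ of $\alpha$ preserves the subspace cut out by $w^m=\phi$; this is exactly the standard construction of a lift of $g$ to the cyclic cover. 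Thus we obtain $\widetilde{g}\in\mathrm{Bim}(Y_\phi)$ (after passing to the smooth model, using that $\mathrm{Bim}$ is a bimeromorphic invariant) whose action on the tautological $n$-form $\eta$ is multiplication by $\beta:=\alpha^{1/m}$ — because $\widetilde{g}$ pulls back $w$ to $\alpha^{1/m}w$ by construction, and $\eta=w\cdot(\text{pullback of coordinate }n\text{-form})$ while $g$ acts trivially on the base $n$-form up to the discrepancy already absorbed. So $\beta$ is an eigenvalue of $\widetilde{g}$ acting on $\mathrm{H}^0(Y_\phi,\oo_{Y_\phi}(K_{Y_\phi}))$.

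Now Proposition \ref{main-thm-m-1} applies directly to the compact smooth equidimensional space $Y_\phi$ and the element $\widetilde{g}\in\mathrm{Bim}(Y_\phi)$: the degree $[\mathbb{Q}(\beta):\mathbb{Q}]$ is bounded by $b_n(Y_\phi)=\mathrm{rk}(\mathrm{H}^n(Y_\phi,\mathbb{Z})/\mathrm{tors})$. Since $\alpha=\beta^m$, we have $\mathbb{Q}(\alpha)\subseteq\mathbb{Q}(\beta)$, hence $[\mathbb{Q}(\alpha):\mathbb{Q}]\leq[\mathbb{Q}(\beta):\mathbb{Q}]\leq b_n(Y_\phi)$, which is a constant $C=C(X,\phi)$ depending only on $X$ and $\phi$ and not on $g$. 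This is exactly the asserted bound.

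The main obstacle I expect is the careful bookkeeping in the middle step: making precise that $g$ genuinely induces a bimeromorphic self-map of $Y'_\phi$ (and hence of its smooth model $Y_\phi$) despite $g$ being only bimeromorphic on $X$ — the cyclic cover construction behaves well under the generically finite map $\tau$, but one must check that the indeterminacy locus of $g$ pulls back to a nowhere dense analytic subset of $Y_\phi$ and that the lift is well-defined independent of the choice of embedded smooth model (different models are related by further blow-ups, under which $\mathrm{Bim}$, the canonical action on $n$-forms, and $b_n$ of the top cohomology are unchanged — or at least the relevant eigenvalue is). The identification of the action of the lift on the tautological $n$-form with multiplication by $\alpha^{1/m}$ also needs the discrepancy/ramification formula for $\tau$ to be stated cleanly, which is where Ueno's original argument does the real work; I would reproduce that computation following \cite[14.4]{Ue75}.
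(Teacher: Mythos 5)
Your proposal follows essentially the same route as the paper's proof: both construct the tautological $n$-form $w\,dz^1\wedge\cdots\wedge dz^n$ on the $m$-canonical cover $Y'_\phi\subset M'$ (whose $m$-th power restricts to the pullback of $\phi$), lift $g$ to a bimeromorphic self-map of a smooth model $Y_\phi$ by composing the induced map $g_{M'}$ on the total space with the fiberwise scaling $\mu_\beta$ where $\beta^m=\alpha$, observe that this lift multiplies the tautological form by $\beta$, and then invoke Proposition~\ref{main-thm-m-1} to bound $[\mathbb{Q}(\beta):\mathbb{Q}]$ by $b_n(Y_\phi)$, with $[\mathbb{Q}(\alpha):\mathbb{Q}]\le[\mathbb{Q}(\beta):\mathbb{Q}]$ since $\alpha=\beta^m$. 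The "bookkeeping" you flag as the main obstacle (that $\mu_\beta\circ g_{M'}$ preserves $Y'_\phi$, via the identity $\phi_i\circ g=\alpha\,\phi_i\cdot\det(\partial g/\partial z)^{-m}$) is exactly the explicit Jacobian computation the paper writes out, so your plan is on target.
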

\begin{proof}

Let $f\colon Y'_\phi \to X$ be the $m$-canonical cover of $X$ associated with an $m$-form $\phi$ as in Construction \ref{constr-cover}. Note that $Y'_\phi$ is embedded into the total space $M'$ of the line bundle~$\oo_X(K_X)$. Define a holomorphic $n$-form $\psi$ on $M'$ in the following way. Let $\{ U_i \}_{i\in I}$ be a sufficiently fine open covering of $X$ with local coordinates $z_i^1, \ldots, z_i^n$ on $U_i$. Then $M'$ admits an open covering $\{ V_i \}_{i\in I}$ where $V_i=U_i\times \mathbb{C}$ with local coordinates $z_i^1, \ldots, z_i^n, w_i$. For each $i\in I$, in the chart $V_i$ put
\[
\psi = w_i dz_i^1\wedge \ldots \wedge dz_i^n.
\] 
The holomorphic forms in the right-hand side agree on the intersections $V_i\cap V_j$, hence they define a global homomorphic $n$-form $\psi$ on $M'$.

Note that any bimeromorphic map $g\in \mathrm{Bim}(X)$ induces a bimeromorphic map $g_{M'}\in \mathrm{Bim}(M')$. More precisely, if $g(U_i)\cap U_j\neq \emptyset$, then the map 
\[
g_{M'}|_{V'_j}\colon V'_j = g^{-1}(g(U_i)\cap U_j)\times\mathbb{C}\to U_j\times \mathbb{C}
\] 
is given by 
\[
(z_i^1, \ldots, z_i^n, w_i) \mapsto \left(g^1(z_i), \ldots, g^n(z_i), \det \left(\frac{\partial (g^1(z_i), \ldots, g^n(z_i))}{\partial(z_i^1, \ldots, z_i^n)}\right)^{-1} w_i\right).
\]

Define a holomorphic automorphism $\mu_\beta\in \mathrm{Aut}(M')$ locally given by 
\[
\mu_\beta\colon(z_i^1, \ldots, z_i^n, w_i) \mapsto (z_i^1, \ldots, z_i^n, \beta w_i),
\]
where $\beta\in \mathbb{C}$ satisfies $\beta^m=\alpha$. Since by assumption we have $g^*\phi=\alpha\phi$, it follows that 
\[
\phi_i(g^1(z_i),\ldots,g^n(z_i)) = \alpha \phi_i(z_i^1,\ldots, z_i^n) \det \left(\frac{\partial (g^1(z_i), \ldots, g^n(z_i))}{\partial(z_i^1, \ldots, z_i^n)}\right)^{-m}.
\]
Thus, from \eqref{eq-local-definition} in Construction \ref{constr-cover} it follows that the map $h'=\mu_\beta\circ g_{M'}$ preserves $Y'_\phi$, and hence~$h'$ induces a bimeromorphic automorphism of $Y'_\phi$. 

Consider any embedded smooth model $Y_\phi$ of $Y'_\phi$ (we know that it exists by \cite[Theorem~2.0.2]{Wl08}), where $Y'_\phi$ is embedded into $M'$, and $Y_\phi$ is embedded into a smooth manifold $M$.
We denote the corresponding map as $f_2\colon Y_\phi\to Y'_\phi$. 
Let $f_1\colon Y'_\phi\to X$ be the restriction of the natural projection $\tau\colon M'\to X$ to $Y'_\phi\subset M'$. Finally, put 
\[
f=f_1\circ f_2\colon Y_\phi\to X.
\] 

Denote by $\omega$ a holomorphic form on $Y_\phi$ which is obtained by pulling back $\psi$ to $M$ and then restricting to $Y_\phi$. 
We have
\[
\omega^m = f^*\phi.
\]
Recall that $g^*\phi=\alpha\phi$. Note that the bimeromorphic map $h'=\mu_\beta\circ g_{M'}$ 
induces a bimeromorphic map $h$ on $M$. Since by the above $h'$ restricts to a bimeromorphic automorphism on $Y'_\phi$, it follows that $h$ restricts to a bimeromorphic automorphism on $Y_\phi$. Then
\[
(\mu_\beta\circ g_{M'})^* \psi = \beta\psi, \quad \quad h^*(\omega)=\beta \omega.
\]
Hence $\beta$ is an eigenvalue of an operator $h$ acting on 
$
\mathrm{H}^0(Y_\phi, \oo_{Y_\phi}(K_{Y_\phi})).
$
Then by Proposition~\ref{main-thm-m-1}, the degree of the field extensions $[\mathbb{Q}(\beta):\mathbb{Q}]$ is bounded by the Betti number $b_{n}(Y_\phi)$, and by construction $Y_{\phi}$ depends only on $X$ and~$\phi$. Since $\beta^m=\alpha$, 
the degree of the field extension $[\mathbb{Q}(\alpha):\mathbb{Q}]$ is also bounded by~$b_{n}(Y_\phi)$.
\end{proof}

\begin{construction}
\label{constr-cover-global}
Let $X$ be a compact complex manifold. Following \cite[14.10]{Ue75}, we construct its \emph{global $m$-canonical cover} $Y'$ together with a map $f\colon Y'\to X$. It can be seen as a global version of the $m$-canonical cover \ref{constr-cover}. The construction is as follows. 

Let $\phi_0, \ldots, \phi_N$ be a basis of the vector space $\mathrm{H}^0(X, \oo_X(mK_X))$, and let 
\[
\mathbb{P}^N = \mathbb{P}(\mathrm{H}^0(X, \oo_X(mK_X))).
\] 
be the corresponding projective space. Consider a product $\mathbb{P}^N\times M'$ where $M'$ is the total space of $\oo_X(K_X)$ considered as a non-compact complex manifold. Let $\{ U_i \}_{i\in I}$ be a sufficiently fine open covering of $X$ with local coordinates $z_i^1, \ldots, z_i^n$ on~$U_i$. Let 
\[
u_0,\ldots, u_{k-1}, u_{k+1}, \ldots, u_N
\]
be the local coordinates on some affine chart $V_k$ of $\mathbb{P}^N$ for $0\leq k\leq N$. Locally on $U_i$ for any $0\leq j\leq N$ we can write 
\[
\phi_j = \phi_{j,i}(z^i_1,\ldots, z^i_n) dz^i_1\wedge\ldots\wedge dz^i_n.
\]
We define $Y'$ as a possibly singular analytic subspace of $\mathbb{P}^N\times M'$ locally in $V_k\times U_i\times \mathbb{C}$ defined by the equations
\[
w_i^m = u_1 \phi_{0, i} + \ldots + u_{k-1} \phi_{k-1, i} + \phi_{k, i} + u_{k+1} \phi_{k+1, i}  + \ldots + u_n \phi_{N, i},
\]
where $w_i$ is the coordinate along the factor $\mathbb{C}$. These equations agree on the intersections of charts and hence they define an analytic subspace $Y'$ of $M'$.
\end{construction}

\begin{remark}
Consider the natural projection 
\[
\pi \colon Y'\to \mathbb{P}^N,
\] 
induced by the projection map $\mathrm{pr}_1\colon \mathbb{P}^N\times M'\to\mathbb{P}^N$. Note that for any $\phi \in \mathrm{H}^0(X, \oo_X(mK_X))$, if we consider the corresponding point $\phi\in \mathbb{P}^N$, we have $\pi^{-1}(\phi)\simeq Y'_{\phi}$ where $Y'_\phi$ is the $m$-canonical cover associated with an $m$-form $\phi$ as in Construction \ref{constr-cover}. Conversely, any $m$-canonical cover $Y'_{\phi}$ associated with an $m$-form $\phi$ is isomorphic to $\pi^{-1}(\phi)$ for $\phi\in \mathbb{P}^N$. Note that the map $\pi$ is proper, and by construction its fibers are equidimensional. 
\end{remark}

We are ready to prove Theorem \ref{prop-pluri-bfs}. Note that unlike in Theorem \ref{thm-finite-moishezon}, we do not need the assumption that $X$ is Moishezon.

\begin{proof}[Proof of Theorem \ref{prop-pluri-bfs}]
Let $X$ be a compact complex variety of dimension $n$. Passing to a resolution of singularities if needed, we may assume that $X$ is smooth. Let $m$ be a number such that the linear system $|mK_X|$ defines the pluricanonical map. 

Put $V = \mathrm{H}^0(X, \oo_X(mK_X))$. We are going to prove that the groups $\Gamma=\rho(\mathrm{Bim}(X))\subset \mathrm{GL}(V)$ and $\Gamma'=\overline{\rho}(\mathrm{Bim}(X))\subset \mathrm{PGL}(V)$ have bounded finite subgroups. By Lemma \ref{lem-bfs-image-bfs}, it is enough to show that for any $g\in \Gamma$ and for any eigenvalue $\alpha$ of $g$ the degree of the field extension $[\mathbb{Q}(\alpha):\mathbb{Q}]$ is bounded by some constant which is independent of $g$ and $\alpha$. 

By Lemma \ref{lem-local-bounding}, we know that $[\mathbb{Q}(\alpha):\mathbb{Q}]$ is bounded by the Betti number $b_{n}({Y_{\phi}})$ where $Y_\phi$ is any embedded smooth model of the $m$-canonical cover $Y'_\phi$ associated to $\phi$ as in Construction \ref{constr-cover}. Hence it is enough to find embedded smooth models of $Y'_\phi$ such that the Betti numbers 
$
b_{n}({Y_{\phi}})
$ 
are uniformly bounded for any 
\[
\phi \in \mathrm{H}^0(X, \oo_X(mK_X)).
\] 
To this end, we apply Lemma \ref{lem-bounded-betti} to the global $m$-canonical cover $f\colon Y'\to X$ as in Construction~\ref{constr-cover-global}. More precisely, in the notation of Lemma \ref{lem-bounded-betti}, put 
$
\mathbb{P}^N=W 
$ and $\pi\colon Y'\to W$ to conclude that the Betti numbers of some embedded smooth models $Y_\phi\subset M_\phi$ of $Y'_\phi\subset M'$ are bounded for any $\phi$. 
This finishes the proof. 
\end{proof}

The following example shows that we may not have finite $\rho(\mathrm{Bim}(X))$ when $X$ is a compact complex compact manifold.

\begin{example}
\label{example-infinite}
By \cite{MS08}, there exists a non-algebraic K3 surface $X$ 
that admits a non-symplectic automorphism of infinite order acting on a holomorphic two-form via multiplication by a complex number of infinite order. Consequently, $\rho(\mathrm{Bim}(X))$ is not a finite group. Note also that by \cite[Remark 14.6]{Ue75}, there exists a $3$-dimensional complex torus such that the group $\rho(\mathrm{Bim}(X))$ is infinite.
\end{example}

To prove Theorem \ref{main-thm-2}, we will use the following technical results.

\begin{lemma}[{\cite[Lemma 2.9]{PSh21b}}]
\label{lem-fiber-non-negative-kodaira-dim}
Let $X$ and $Y$ be compact complex varieties and let $\sigma\colon X \dasharrow~Y$ be a dominant
meromorphic map. Suppose that $\kappa(X)\geq 0$. Let $F$ be a typical fiber of $\varphi$, and let $F'$ be an
irreducible component of $F$. Then $\kappa(F')\geq 0$.
\end{lemma}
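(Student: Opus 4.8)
The plan is to reduce to a surjective holomorphic map from a compact complex manifold and then restrict a pluricanonical section of $X$ to a general fiber. First I would make $\sigma$ holomorphic: let $\overline{\Gamma_\sigma}\subset X\times Y$ be the closure of the graph, which is compact since $X$ and $Y$ are; the projection $\overline{\Gamma_\sigma}\to X$ is a modification, and $\overline{\Gamma_\sigma}\to Y$ is a surjective holomorphic map because $\sigma$ is dominant and $Y$ is irreducible. Resolving the singularities of $\overline{\Gamma_\sigma}$ via \cite[Theorem 2.0.2]{Wl08} produces a compact complex manifold $\widetilde{X}$ with a bimeromorphic morphism $\widetilde{X}\to X$ and a surjective holomorphic map $\widetilde{\sigma}\colon\widetilde{X}\to Y$. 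Over a dense Zariski-open subset of $X$, hence over a dense Zariski-open subset of $Y$, the map $\widetilde{X}\to X$ is an isomorphism, so a typical fiber of $\widetilde{\sigma}$ maps component-by-component bimeromorphically onto the components of a typical fiber of $\sigma$; as these components are compact and $\kappa$ is a bimeromorphic invariant of compact complex varieties, it suffices to treat $\widetilde{\sigma}$. Thus I assume henceforth that $X$ is a compact complex manifold and $\sigma\colon X\to Y$ is a surjective holomorphic map.

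Next I would extract a good typical fiber. By generic smoothness in the analytic category (Sard's theorem, cf.~\cite{Ue75}) there is a dense Zariski-open $V\subseteq Y$ over which $\sigma$ is a submersion; and by \cite[Proposition V.2.4]{BS76} (flatness over a dense open, then equidimensionality) I may shrink $V$ so that, for every $y\in V$, the fiber $F=\sigma^{-1}(y)$ is a smooth compact complex manifold, equidimensional of dimension $d=\dim X-\dim Y$. In particular its irreducible components $F'$ are disjoint smooth compact manifolds, each of dimension $d$. Since $\sigma$ is submersive along $F$, the normal bundle $N_{F'/X}$ is trivial, so adjunction gives $K_{F'}\cong K_X|_{F'}$ and hence $H^0\bigl(F',mK_{F'}\bigr)\cong H^0\bigl(F',(mK_X)|_{F'}\bigr)$ for all $m\geq 1$.

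Finally I would use the hypothesis $\kappa(X)\geq 0$: choose $m\geq 1$ with a nonzero section $s\in H^0(X,mK_X)$, and let $D=\divv(s)\geq 0$ be its zero divisor, a closed analytic subset of pure dimension $\dim X-1$ that is not all of $X$. Writing $D=\bigcup_i D_i$ into irreducible components, for each $i$ either $\sigma(D_i)\subsetneq Y$, so $D_i\cap\sigma^{-1}(y)=\varnothing$ for $y$ outside a proper analytic subset, or $\sigma(D_i)=Y$, so a typical fiber of $D_i\to Y$ has dimension $d-1$. Shrinking $V$ accordingly, for $y\in V$ the set $D\cap\sigma^{-1}(y)$ has dimension $\leq d-1$, so no $d$-dimensional component $F'$ of $F=\sigma^{-1}(y)$ is contained in $D$; therefore $s|_{F'}\neq 0$ in $H^0(F',(mK_X)|_{F'})\cong H^0(F',mK_{F'})$, which yields $\kappa(F')\geq 0$.

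The main obstacle I expect is the bookkeeping in the first paragraph: verifying that passing to $\widetilde{X}$ genuinely preserves typical fibers and their irreducible components up to bimeromorphic equivalence, while keeping the relevant pieces compact so that bimeromorphic invariance of $\kappa$ applies, and correctly invoking the analytic-category versions of generic smoothness and of the fiber-dimension/flatness theorems. The pluricanonical restriction argument itself is then immediate.
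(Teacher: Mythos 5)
Your argument is correct, and it is the standard pluricanonical-restriction proof of this fact. The present paper does not reprove the lemma but cites \cite[Lemma~2.9]{PSh21b}, where after the same reduction to a surjective holomorphic map from a compact manifold the conclusion is reached via Stein factorization and Iitaka's easy-addition inequality $\kappa(X)\leq\kappa(F')+\dim Y$; your version unwinds that inequality by hand, which makes it more self-contained and shows exactly where $\kappa(X)\geq 0$ enters.

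The one place that needs a touch more care than you give it is the reduction in your first paragraph. The exceptional locus $E$ of $\mu\colon\widetilde{X}\to X$ may well dominate $Y$, so one cannot simply discard $\widetilde{\sigma}(E)$ from $Y$ and say the typical fiber lies in the isomorphism locus. Instead, run the same fiber-dimension count you later apply to $D$: since $E$ is nowhere dense, $\dim E\leq\dim X-1$, hence for typical $y$ one has $\dim\bigl(\widetilde{\sigma}^{-1}(y)\cap E\bigr)\leq d-1$, so no $d$-dimensional irreducible component of $\widetilde{\sigma}^{-1}(y)$ is contained in $E$; consequently $\mu$ restricts to a modification from each such component onto an irreducible component of the corresponding graph-closure fiber of $\sigma$, and bimeromorphic invariance of $\kappa$ applies. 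With that clarification, the rest of the proof---adjunction via the trivial normal bundle of a submersion fiber giving $K_{F'}\cong K_X|_{F'}$, and non-vanishing of $s|_{F'}$ because $\divv(s)$ meets the typical fiber in codimension at least one---goes through exactly as written.
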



\begin{lemma}[{\cite[Lemma 3.2]{PSh21b}}]
\label{lem-bfs+jordan}
Let $X$ and $Y$ be compact complex varieties and let $\sigma\colon X\dasharrow Y$ be a dominant meromorphic map. Let $F$ be a typical fiber of $\varphi$. Suppose that for any irreducible component $F'$ of $F$ the group $\mathrm{Bim}(F')$ is Jordan. Assume that the map $\varphi$ is equivariant with respect to the group $\mathrm{Bim}(X)$, and that the image of $\mathrm{Bim}(X)$ in $\mathrm{Bim}(Y)$ has bounded finite subgroups. Then the group $\mathrm{Bim}(X)$ is Jordan. 
\end{lemma}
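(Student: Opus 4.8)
The plan is to use the equivariance to pass from $\mathrm{Bim}(X)$ to the subgroup of bimeromorphic self-maps over $Y$, and then, after replacing a given finite subgroup by a biregular model, to restrict it \emph{injectively} to one typical fibre and invoke the Jordan property of the fibre components. First I would record the reduction: the $\mathrm{Bim}(X)$-equivariance of $\sigma$ gives a homomorphism $\mathrm{Bim}(X)\to\mathrm{Bim}(Y)$; writing $\Gamma_Y$ for its image and $\Gamma_0$ for its kernel, we get an exact sequence $1\to\Gamma_0\to\mathrm{Bim}(X)\to\Gamma_Y\to1$, and since $\Gamma_Y$ has bounded finite subgroups by hypothesis, Remark~\ref{lem-iff-jordan} reduces the statement to proving that $\Gamma_0$ is Jordan, where $\Gamma_0$ is the group of bimeromorphic self-maps of $X$ commuting with $\sigma$. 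So it suffices to bound, uniformly over all finite $G\subset\Gamma_0$, the index of a normal abelian subgroup of $G$.

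Fixing a finite $G\subset\Gamma_0$, I would pass to a $G$-equivariant model: by equivariant resolution of singularities (resolving the closure of the graph of the $G$-action and the indeterminacy of $\sigma$ by $G$-invariant blow-ups; see \cite{Wl08}) we may assume $X$ is smooth, $G\subset\mathrm{Aut}(X)$, and $\sigma\colon X\to Y'$ is a morphism onto a variety $Y'$ bimeromorphic to $Y$ which is $G$-invariant ($\sigma\circ g=\sigma$ for $g\in G$, since this holds over a dense open subset of $X$ and hence everywhere). The crucial claim is that for a suitable typical point $y_0\in Y'$ with smooth fibre $F:=\sigma^{-1}(y_0)$ the restriction homomorphism $G\to\mathrm{Aut}(F)$ is injective. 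To see this, for $g\in G\setminus\{\mathrm{id}\}$ note that $\mathrm{Fix}(g)$ is a proper closed analytic subset of the connected manifold $X$; the images in $Y'$ of its components that do not dominate $Y'$ are proper closed analytic subsets by Remmert's proper mapping theorem, while a dominating component $Z\subsetneq X$ satisfies $\dim Z<\dim X$, so the typical fibre of $\sigma|_Z$ has dimension $\dim Z-\dim Y'<d$, where $d:=\dim X-\dim Y'$. Hence over a dense Zariski-open $V_g\subset Y'$ one has $\dim(\sigma^{-1}(y)\cap\mathrm{Fix}(g))<d$, while $\sigma^{-1}(y)$ is of pure dimension $d$ by generic equidimensionality; so $g$ moves a point of $\sigma^{-1}(y)$ for every $y\in V_g$. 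Choosing $y_0$ in the finite intersection $\bigcap_{g\neq\mathrm{id}}V_g$, inside the locus where $\sigma$ is smooth, gives the claim.

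To finish I would use the hypothesis on the fibres. The smooth fibre $F$ is a disjoint union $F_1\sqcup\ldots\sqcup F_r$ of its irreducible components, and each $F_i$ is bimeromorphic to an irreducible component of the typical fibre of the original map $\sigma$, so $\mathrm{Bim}(F_i)$ is Jordan by hypothesis and $r$ is independent of $G$. The $G$-action permutes the $F_i$, so the kernel $G_0\trianglelefteq G$ of $G\to\mathrm{Sym}\{F_1,\ldots,F_r\}$ has index at most $r!$ and, by restriction to the components, embeds into $\prod_{i=1}^{r}\mathrm{Bim}(F_i)$ (injectively, since $G\hookrightarrow\mathrm{Aut}(F)$ and $G_0$ preserves each component). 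A finite product of Jordan groups is Jordan, so $G_0$ has a normal abelian subgroup of index at most $\prod_{i=1}^{r}J(\mathrm{Bim}(F_i))$; hence $G$ has an abelian subgroup of index at most $r!\prod_i J(\mathrm{Bim}(F_i))$, and taking its normal core in $G$ produces a normal abelian subgroup of $G$ whose index is bounded by a constant independent of $G$. Thus $\Gamma_0$ is Jordan, and therefore so is $\mathrm{Bim}(X)$.

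The step I expect to be the main obstacle is the injectivity $G\hookrightarrow\mathrm{Aut}(F)$, i.e.\ showing that a nontrivial finite-order bimeromorphic self-map of $X$ over $Y$ cannot restrict to the identity on a typical fibre. This is what forces the passage to a biregular $G$-equivariant model, so that $\mathrm{Fix}(g)$ is an honest analytic subset and the dimension count above is available, and it also forces the choice of a single typical point $y_0$ that is good simultaneously for all of the finitely many $g\in G$. The bimeromorphic invariance of the typical fibre and of its number of irreducible components is then what makes the final bound uniform in $G$.
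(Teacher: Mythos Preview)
The paper does not give its own proof of this lemma; it is quoted from \cite[Lemma~3.2]{PSh21b} and used as a black box. Your argument is the standard one and is correct --- in fact the paper's source contains, immediately after the proof of Corollary~\ref{main-cor-4}, a commented-out sketch that follows precisely your route: regularise a given finite $G\subset\mathrm{Bim}(X)$ to a biregular action, use the exact sequence together with Remark~\ref{lem-iff-jordan} to reduce to the kernel $G_\phi$ acting fibrewise, and then embed that kernel faithfully into $\mathrm{Aut}$ of a typical fibre to invoke the Jordan property there.

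One point deserves care. Your final bound $r!\prod_i J(\mathrm{Bim}(F_i))$ is evaluated at a typical point $y_0$ that depends on $G$, so for the Jordan constant of $\Gamma_0$ to be uniform in $G$ you need the number $r$ of components and the Jordan constants $J(\mathrm{Bim}(F'))$ to be bounded uniformly over \emph{all} typical fibres, not just the single $F$ named in the hypothesis. Your sentence ``each $F_i$ is bimeromorphic to an irreducible component of the typical fibre of the original map~$\sigma$'' does not by itself justify this, since distinct typical fibres of a holomorphic family need not be bimeromorphic to one another (think of a non-isotrivial family of K3 surfaces). In every application made in this paper the uniformity is automatic --- the fibres are curves, or surfaces with $\kappa\geq 0$, or compact K\"ahler threefolds with $\kappa\geq 0$, classes for which uniform Jordan constants are available --- and this is how the hypothesis in \cite{PSh21b} is meant to be read; it would simply be cleaner to say so explicitly.
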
 

Now, we are ready to prove Theorem \ref{main-thm-2}.


\begin{proof}[Proof of Theorem \ref{main-thm-2}]
Let $X$ be a compact complex variety of dimension $n$ with $\kappa(X)\geq n-2$. Passing to a resolution of singularities if needed, we may assume that $X$ is smooth. 
Let $F$ be a typical fiber of the pluricanonical map. Then by section \ref{subsec-pluricanonical-rep} we may assume that $F$ is irreducible and smooth. 
By Lemma \ref{lem-fiber-non-negative-kodaira-dim}, we have $\kappa(F)\geq 0$. By assumption, we have $\dim F\leq 2$. Consider two cases. 
The first possibility is that $\dim F\leq 1$, and hence $\mathrm{Bim}(F) = \mathrm{Aut}(F)$ is obviously Jordan. The second possibility is that $\dim F=2$, and since $\kappa(F)\geq 0$, it follows that~$F$ is not bimeromorphic to a product of $\mathbb{P}^1$ and some other curve. Hence by \cite[Theorem 1.7]{PSh21} the group $\mathrm{Bim}(F)$ is Jordan. Since by Theorem \ref{prop-pluri-bfs} the group $\rho(\mathrm{Bim}(X))$ has bounded finite subgroups, applying Lemma \ref{lem-bfs+jordan} we conclude that $\mathrm{Bim}(X)$ is Jordan.
\end{proof}

Then, Corollary \ref{main-cor-3} follows immediately.

\begin{proof}[Proof of Corollary \ref{main-cor-4}]
Let $X$ be a compact complex variety of dimension $n$ with $\kappa(X)\geq n-3$. Passing to a resolution of singularities if needed, we may assume that $X$ is smooth. 
By Theorem~\ref{main-thm-2}, it is enough to consider the case $\kappa(X)=n-3$. By Lemma~\ref{lem-fiber-non-negative-kodaira-dim} a typical fiber $F$ is a $3$-dimensional K\"ahler manifold, and by Theorem \ref{thm-gol} we have that $\mathrm{Bim}(F)$ is Jordan. By Theorem \ref{prop-pluri-bfs} the image of projective pluricanonical representation $\overline{\rho}(\mathrm{Bim}(X))$ has bounded finite subgroups. Applying Lemma \ref{lem-bfs+jordan} to the pluricanonical map, we conclude that $\mathrm{Bim}(X)$ is Jordan.
\end{proof}






\begin{thebibliography}{lll}

\bibitem[BS76]{BS76}
C. B\u{a}nic\u{a}, O. St\u{a}n\u{a}\c{s}il\u{a},
\newblock {\em Algebraic methods in the global theory of complex spaces}.
\newblock Editura Academiei, Bucharest; John Wiley \& Sons, London-New York-Sydney, 296 pp, 1976.

\bibitem[CR62]{CR62}
C. W. Curtis and I. Reiner.
\newblock {\em Representation theory of finite groups and associative algebras}.
\newblock Pure and Applied Mathematics, Vol. XI. Interscience Publishers, a division of John Wiley \& Sons, New York-London, 1962.

\bibitem[Go21]{Go21}
A. Golota.
\newblock {\em Jordan property for groups of bimeromorphic automorphisms of compact K\"ahler threefolds}.
\newblock arXiv:2112.02673, 2021.

\bibitem[HX16]{HX16}
C. Hacon, C. Xu. 
\newblock {\em On finiteness of B-representations and semi-log canonical abundance}.
\newblock Minimal models and extremal Rays (Kyoto, 2011), 361-378, Adv. Stud. in Pure Math 70, Math. Soc. Japan, Tokyo, 2016.

\bibitem[HP76]{HP76}
M. Herzog, Ch. E. Praeger. 
\newblock {\em Boundedness for finite subgroups of linear algebraic groups}.
\newblock On the order of linear groups of fixed finite exponent. J. Algebra 43, 216--220, 1976.

\bibitem[Kim18]{Kim18}
J. H. Kim.
\newblock {\em Jordan property and automorphism groups of normal compact K\"ahler varieties}.
\newblock Commun. Contemp. Math. 20, no. 3, 1750024, 9 pp, 2018.
 
\bibitem[MS08]{MS08}
E. Macr\'i, P. Stellari.
\newblock {\em Automorphisms and autoequivalences of generic analytic surfaces}.
\newblock Journal of Geometry and Physics, vol. 58, 1, 133--164, 2008.
 
\bibitem[MZh18]{MZh18}
S. Meng and D.-Q. Zhang.
\newblock {\em Jordan property for non-linear algebraic groups and projective varieties}.
\newblock Amer. J. Math. 140, no. 4, 1133--1145, 2018.


\bibitem[Po09]{Po09}
V. L. Popov.
\newblock {\em On the Makar-Limanov, Derksen invariants, and finite automorphism groups of algebraic varieties}.
\newblock In Peter Russell's Festschrift, Proceedings of the conference on Affine Algebraic Geometry held in Professor Russell's honour, 1--5 June 2009, McGill Univ., Montreal., vol. 54 of Centre de Recherches Math\'ematiques CRM Proc. and Lect. Notes, pages 289--311, 2011.

\bibitem[PSh21]{PSh21}
Yu. Prokhorov, C. Shramov. 
\newblock {\em Automorphism groups of compact complex surfaces}.
\newblock Int. Math. Res. Notices, 2021(14):10490--10520, 2021.

\bibitem[PSh21b]{PSh21b}
Yu. Prokhorov, C. Shramov. 
\newblock {\em Finite groups of bimeromorphic selfmaps of non-uniruled K\"ahler threefolds}.
\newblock arXiv:2110.05825, 2021. To appear in Sb. Math.

\bibitem[Ser07]{Ser07}
J.-P. Serre.
\newblock {\em Bounds for the orders of the finite subgroups of G(k)}.
\newblock In Group representation theory, pages 405--450. EPFL Press, Lausanne, 2007.


\bibitem[Ue75]{Ue75}
K. Ueno.
\newblock {\em Classification theory of algebraic varieties and compact complex spaces}.
\newblock Lecture Notes in Mathematics, Vol. 439. Springer-Verlag, Berlin-New York, 1975. Notes written in collaboration with P. Cherenack.

\bibitem[VT87]{VT87}
V. Van Tan.
\newblock {\em On the compactification of strongly pseudoconvex surfaces III}.
\newblock Mathematische Zeitschrift Vol. 195, pages 259--267, 1987.

\bibitem[Wl08]{Wl08}
Ja. Wlodarczyk.
\newblock {\em Resolution of singularities of analytic spaces}.
\newblock Proceedings of 15th G\"okova Geometry-Topology Conference, 31--63, 2008.

\end{thebibliography}
\end{document}